\theoremstyle{plain} 
\newtheorem{teo}{Teorema }[section]
\newtheorem{maintheorem}{Theorem}
\newtheorem{mainpropo}{Proposition}
\newtheorem{coro}[teo]{Corollary}
\newtheorem{lema}[teo]{Lemma}
\newtheorem{defi}[teo]{Definition}
\title{ Topological Entropy and Metric Entropy for Regular Impulsive Semiflows}
\author{Nelda Jaque \and  Bernardo San Mart\'in }
\address{
Nelda Jaque, 
Departamento de Matem\'aticas, 
Universidad de Atacama, 
Copayapu 485, Casilla 240,
Copiap\'o, Chile.}
\email{njaquetamblay@gmail.com}
\address{
Bernardo San Mart\'in, 
Departamento de Matem\'aticas, 
Universidad Cat\'olica del Norte, 
Avenida Angamos 0610, Casilla 1280,
Antofagasta, Chile.}
\email{sanmarti@ucn.cl}
\thanks{  The author was partially supported by
 FONDECYT Grant 1181183 through CONICYT (Chile).}
\begin{document}

\maketitle

\begin{abstract}
In \cite{JSM}  a variation of Bowen's topological entropy that can be applied to the study of discontinuous semiflows on compact metric spaces was introduced. The main novelty is the use of certain family of pseudosemimetrics associated to the semiflow, in a such a way that in the continuous case they coincide with the  classical Bowen's entropies.  It was proved that  a regular impulsive semiflow can be semiconjugated by a uniformly continuous bijection to certain continuous semiflow defined in a compact metric space.  In the cited paper, the entropy is defined by the growth rate of separated sets. The main results of the present work show that, for regular impulsive semiflows, the  entropy defined by using  spanning sets agrees with the entropy using separated sets. Moreover,  this  notion of entropy satisfy a variational principle. This results are obtained introducing a little change on the family of pseudosemimetrics associated to.
\end{abstract}

\maketitle

\section{Introduction}

The notion of metric entropy is  one of the most successful invariants in ergodic theory, and was introduced by Kolgomorov in 1958 and by Sinai in 1959 (see \cite{kolmogorov} and \cite{sinai}, respectively). In 1965 Adler, Konheim and McAndrew  (see \cite{alder}) gave the concept of entropy for continuous maps on compact topological space and proved that this notion of entropy is an invariant for continuous mappings. The relationship between metric entropy and topological entropy is given by the well-known variational principle. 

In 1970 Dinaburg (see \cite{dinaburg}) details a variational principle for a homeomorphism on a compact metric space of finite dimension. Goodman in 1971 (see \cite{goodman})  extend this result to a general continuous map on any compact Hausdorff space. Another two definitions of topological entropy for homeomorphism and flow on a not necessarily compact metric space was given by Bowen in 1971 (see \cite{bowen}). Bowen gave the notion using spanning sets and separated sets and proved that they  agree when the space is compact. The variational principle for continuous semiflows on compact metric spaces is obtained with \cite[Theorem 2.1]{dinaburg} and \cite[Proposition 21]{bowen}. 

In the context of non necessarily continuous semiflows on a compact metric space, in 2015 Alves, Carvalho and V\'asquez (see \cite{v2})  introduced the notion of topological $\tau$-entropy, which depends on an admissible function $\tau$, and they proved the variational principle for impulsive semiflows. Later, in \cite{JSM} were introduced some variations of  Bowen's entropy definitions  for non necessarily continuous semiflows, by using certain family of semi-metrics associated to the semiflow.  Moreover, \cite[Theorem 2]{JSM} proved that this definitions are a lower bound for the $\tau$-entropy defined in \cite{v2}. In this work we consider  finer pseudosemimetrics, which generate finer entropy notions than those defined in \cite{JSM}. The main results of this paper establish that for regular impulsive dynamical systems   all this notions of entropy coincide and satisfy the variational principle. The paper is organized as follows: In the second section the necessary notation and  definitions are introduced, and the statements of main results are given. All the following sections are devoted  one by one  to prove each of the one  announced theorems.

\section{Setting and Statements}\label{section1}

Here and throughout this paper, we  denote by  $(X,d)$ a compact metric space and $\phi:\mathbb{R}^+_0\times X\to X$ a semiflow that is not necessarily continuous. We will use  the notation $\phi(t,x) = \phi_t(x)$. 
For $\delta>0$, we define the pseudosemimetric\footnote{A pseudosemimetric on $X$ is a map $D:X\times X\to \mathbb{R}_0^+$ satisfying $D(x,x)=0$ and $D(x,y)=D(y,x)$ for all $x,y\in X$.}
$d_{\delta}^{1,\phi}=d_{\delta}^{1}:X\times X\to\mathbb{R}_0^+$  as
\begin{eqnarray*}
d_{\delta}^{1}(x,y)=\inf\{d(\phi_{s_1}(x),\phi_{s_2}(y)):s_1,s_2\in[0,\delta)\}.
\end{eqnarray*}
Given  $\varepsilon>0$ and $T>0$, a set $F\subseteq X$ is said to be\emph{ $(\phi,T,\varepsilon,\delta)$-spanning} for the pseudosemimetric $d_{\delta}^1$, if for all $x\in X$, there exists $y\in F$ such that for all $t\in[0,T]$
$$d_{\delta}^1(\phi_t(x),\phi_t(y))<\varepsilon.$$
A set $E\subseteq X$ is said to be\emph{ $(\phi,T,\varepsilon,\delta)$-separated} for the pseudosemimetric $d_{\delta}^1$, if for all $x,y\in E$, $x\neq y$, there exists $t\in[0,T]$ such that 
$$d_{\delta}^1(\phi_t(x),\phi_t(y))\geq\varepsilon,$$
Define 
$$\bar{r}(\phi,T,\varepsilon,\delta)=\inf\{|F|: F \mbox{ is } (\phi,T,\varepsilon,\delta)\mbox{-spanning for }\, d_{\delta}^1\}$$
and
$$\bar{s}(\phi,T,\varepsilon,\delta)=\inf\{|E|: E \mbox{ is } (\phi,T,\varepsilon,\delta)\mbox{-separated for } \, d_{\delta}^1\}$$
Here, $|A|$ denotes the cardinality of a subset $A\subseteq X$. The \emph{growth rate} of $\bar{r}(\phi,T,\varepsilon,\delta)$ and $\bar{s}(\phi,T,\varepsilon,\delta)$ are defined as

$$\bar{h}_{\text{r}}(\phi,\varepsilon,\delta)=\limsup_{T\to+\infty}\frac{1}{T}\log \bar{r}(\phi,T,\varepsilon,\delta)$$
and 
$$\bar{h}_{\text{s}}(\phi,\varepsilon,\delta)=\limsup_{T\to+\infty}\frac{1}{T}\log \bar{s}(\phi,T,\varepsilon,\delta),$$
where $\log\infty=\infty$. 
Since the function $\varepsilon \to \bar{h}_{\text{r}}(\phi,\varepsilon,\delta)$ and $\varepsilon \to \bar{h}_{\text{s}}(\phi,\varepsilon,\delta)$ are nonincreasing, we  define
$$\bar{h}_{\text{r}}(\phi,\delta)=\lim_{\varepsilon\to0^+}\bar{h}_{\text{r}}(\phi,\varepsilon,\delta)$$
and
$$\bar{h}_{\text{s}}(\phi,\delta)=\lim_{\varepsilon\to0^+}\bar{h}_{\text{s}}(\phi,\varepsilon,\delta),$$
In the same way, as the function $\delta \to \bar{h}_{\text{r}}(\phi,\delta)$ and $\delta \to \bar{h}_{\text{s}}(\phi,\delta)$ are also nonincreasing, we define 
\begin{eqnarray*}
\bar{h}_{\text{r}}(\phi)=\lim_{\delta\to0^+}\bar{h}_{\text{r}}(\phi,\delta) 
\end{eqnarray*}
\begin{eqnarray*}
\bar{h}_{\text{s}}(\phi)=\lim_{\delta\to0^+}\bar{h}_{\text{s}}(\phi,\delta) 
\end{eqnarray*}
A first result show that for a continuous semiflow $\phi$, we recover Bowen's classical entropy $h_{top}(\phi)$ (see \cite{bowen}). 
\begin{maintheorem}\label{teo0}
Let $\phi:\mathbb{R}_0^+\times X\to X$ be a continuous semiflow on the compact metric space $(X,d)$. Then
$$\bar{h}_{\text{r}}(\phi)=\bar{h}_{\text{s}}(\phi)=h_{top}(\phi).$$
\end{maintheorem}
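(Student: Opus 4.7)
The plan is to exploit uniform continuity of $\phi$ on compact time intervals to show that the pseudosemimetric $d_\delta^1$ is uniformly close to the original metric $d$, then sandwich the quantities $\bar r(\phi,T,\varepsilon,\delta)$ and $\bar s(\phi,T,\varepsilon,\delta)$ between Bowen's classical spanning/separated counts evaluated at nearby scales $\varepsilon$ and $\varepsilon+2\eta(\delta)$, where $\eta(\delta)\to 0$ quantifies the uniform closeness.

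First, I would use the fact that the continuous map $(t,x)\mapsto\phi_t(x)$ on the compact set $[0,1]\times X$ is uniformly continuous, so defining
$$\eta(\delta):=\sup\{d(\phi_s(x),x):s\in[0,\delta],\ x\in X\}$$
gives $\eta(\delta)\to 0$ as $\delta\to 0^+$. The triangle inequality
$$d(x,y)\ \leq\ d(x,\phi_{s_1}(x))+d(\phi_{s_1}(x),\phi_{s_2}(y))+d(\phi_{s_2}(y),y)$$
together with the trivial choice $s_1=s_2=0$ for the other direction yields the key two-sided comparison
$$d(x,y)-2\eta(\delta)\ \leq\ d_\delta^1(x,y)\ \leq\ d(x,y).$$

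Second, with this estimate in hand I can compare spanning (resp.\ separated) sets for $d_\delta^1$ with those for $d$. Any Bowen $(\phi,T,\varepsilon)$-spanning set for $d$ is automatically $(\phi,T,\varepsilon,\delta)$-spanning for $d_\delta^1$ since $d_\delta^1\leq d$; conversely, any $(\phi,T,\varepsilon,\delta)$-spanning set for $d_\delta^1$ is a Bowen $(\phi,T,\varepsilon+2\eta(\delta))$-spanning set for $d$. An identical argument applies to separated sets, reading the $\inf$ in the definition of $\bar s$ as the $\sup$ that is standard in Bowen's construction (which I take to be the intended meaning, since otherwise $\bar s\equiv 1$). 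This produces
$$r_{\mathrm{Bowen}}(\phi,T,\varepsilon+2\eta(\delta))\ \leq\ \bar r(\phi,T,\varepsilon,\delta)\ \leq\ r_{\mathrm{Bowen}}(\phi,T,\varepsilon),$$
and the corresponding inequality with $s$ in place of $r$.

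Finally, I would apply $\limsup_{T\to\infty}\tfrac{1}{T}\log$ to each chain to sandwich $\bar h_{\mathrm{r}}(\phi,\varepsilon,\delta)$ and $\bar h_{\mathrm{s}}(\phi,\varepsilon,\delta)$ between two Bowen entropy rates, then let $\varepsilon\to 0^+$ followed by $\delta\to 0^+$. Since $h_{\mathrm{Bowen}}(\phi,\cdot)$ is nonincreasing with limit $h_{top}(\phi)$ at $0^+$, and since $2\eta(\delta)\to 0$, both outer bounds converge to $h_{top}(\phi)$, giving $\bar h_{\mathrm{r}}(\phi)=\bar h_{\mathrm{s}}(\phi)=h_{top}(\phi)$. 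I do not expect any serious obstacle: the whole argument rests on the uniform estimate for $\eta(\delta)$, after which the sandwich is routine; the only subtlety is bookkeeping the order of the three limits in $T,\varepsilon,\delta$, which is handled by monotonicity of all quantities involved and the classical fact that $r_{\mathrm{Bowen}}$ and $s_{\mathrm{Bowen}}$ define the same entropy.
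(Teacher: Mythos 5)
Your proof is correct and follows essentially the same route as the paper's: the paper's Lemma \ref{A2} uses exactly your uniform-continuity estimate (there phrased as: for every $\alpha>0$ there is $\beta>0$ with $d(\phi_t(z),\phi_u(z))<\alpha/4$ for $u\in[t,t+\beta]$, which is your $\eta(\delta)$ in disguise) to compare $d_\delta^1$ with $d$ at nearby scales, while the trivial inequality $d_\delta^1\le d$ gives the opposite direction in Lemma \ref{A1}. Your decision to read the $\inf$ in the definition of $\bar{s}$ as a $\sup$ is the right one and matches the paper's evident intent.
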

In \cite{JSM} was consider the entropy for $\phi$ by using the coarse pseudosemimetric $d_{\delta}$. The entropy by  separated sets  and the entropy by spanning sets were denoted by $\hat{h}_{\text{s}}(\phi)$ $\hat{h}_{\text{r}}(\phi)$, respectively. Moreover, these entropies are related with  $ h_{\text{top}}^{\tau}(\phi)$, the $\tau$- entropy for an  admissible function $\tau$ on $X$ as defined in \cite{v2}. Furthermore, it was proved that
$$\hat{h}_{\text{r}}(\phi)\leq\hat{h}_{\text{s}}(\phi) \leq h_{\text{top}}^{\tau}(\phi).$$  As a second result, we have the following theorem.
\begin{maintheorem}\label{teo1}
Let $\phi:\mathbb{R}_0^+\times X\to X$ be a semiflow on the compact metric space $(X,d)$. Then
$$\bar{h}_{\text{r}}(\phi)\leq \bar{h}_{\text{s}}(\phi), \,\,\, \bar{h}_{\text{r}}(\phi)\leq\hat{h}_{\text{r}}(\phi) \,\, \text{ and } \,\,\bar{h}_{\text{s}}(\phi)\leq \hat{h}_{\text{s}}(\phi).$$

\end{maintheorem}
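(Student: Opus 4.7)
My plan for Theorem~\ref{teo1} is to split the three inequalities into two types: the first is the classical comparison between spanning and separated sets for the same pseudosemimetric $d_{\delta}^{1}$, and the other two come from a pointwise comparison between $d_{\delta}^{1}$ and the coarser pseudosemimetric $d_{\delta}$ used in \cite{JSM}.

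For $\bar{h}_{\text{r}}(\phi)\le\bar{h}_{\text{s}}(\phi)$ I would run the usual maximality argument: fix $T,\varepsilon,\delta>0$ and, using compactness of $X$, choose a $(\phi,T,\varepsilon,\delta)$-separated set $E$ for $d_{\delta}^{1}$ that is maximal for inclusion. Given $x\in X\setminus E$, maximality forces some $y\in E$ with $d_{\delta}^{1}(\phi_t(x),\phi_t(y))<\varepsilon$ for every $t\in[0,T]$, because otherwise $E\cup\{x\}$ would still be separated. Hence $E$ is simultaneously separated and spanning, so $\bar{r}(\phi,T,\varepsilon,\delta)\le|E|\le\bar{s}(\phi,T,\varepsilon,\delta)$ (reading $\bar{s}$ as the largest cardinality of a separated set, which is the interpretation consistent with Theorem~\ref{teo0}). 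Taking $\limsup_{T\to\infty}\frac1T\log$ and then the monotone limits in $\varepsilon$ and $\delta$ gives the required inequality.

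For the remaining two inequalities, the core step is to verify the pointwise domination
\[
d_{\delta}^{1}(x,y)\le d_{\delta}(x,y)\qquad\text{for all }x,y\in X,
\]
which just reflects the fact that the infimum defining $d_{\delta}^{1}$ is taken over the full square $[0,\delta)\times[0,\delta)$, while the pseudosemimetric $d_{\delta}$ of \cite{JSM} uses a smaller family of parameters (morally, the diagonal). From this two monotonicity observations follow. Any $(\phi,T,\varepsilon,\delta)$-spanning set for $d_{\delta}$ spans for $d_{\delta}^{1}$ as well, since $d_{\delta}^{1}(\phi_t x,\phi_t y)\le d_{\delta}(\phi_t x,\phi_t y)<\varepsilon$; therefore $\bar{r}(\phi,T,\varepsilon,\delta)\le\hat{r}(\phi,T,\varepsilon,\delta)$. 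Symmetrically, any $(\phi,T,\varepsilon,\delta)$-separated set for $d_{\delta}^{1}$ is separated for $d_{\delta}$, because $d_{\delta}(\phi_t x,\phi_t y)\ge d_{\delta}^{1}(\phi_t x,\phi_t y)\ge\varepsilon$; hence $\bar{s}(\phi,T,\varepsilon,\delta)\le\hat{s}(\phi,T,\varepsilon,\delta)$. Passing to the $\limsup$ in $T$ and the limits in $\varepsilon,\delta$ preserves both inequalities, producing $\bar{h}_{\text{r}}(\phi)\le\hat{h}_{\text{r}}(\phi)$ and $\bar{h}_{\text{s}}(\phi)\le\hat{h}_{\text{s}}(\phi)$.

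The main obstacle I anticipate is not conceptual but bookkeeping: checking carefully that the pointwise inequality $d_{\delta}^{1}\le d_{\delta}$ holds using the precise definition of $d_{\delta}$ recalled from \cite{JSM}, and confirming that the classical maximality argument survives in this setting even though $\phi_t$ need not be continuous and $d_{\delta}^{1}$ is only a pseudosemimetric (so distinct points may be at $d_{\delta}^{1}$-distance zero). Once that is checked, everything reduces to taking the three nested limits in $T$, $\varepsilon$ and $\delta$, each of which preserves the inequalities by the monotonicity established in Section~\ref{section1}.
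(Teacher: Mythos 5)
Your proposal is correct and follows essentially the same route as the paper: a maximal separated set is automatically spanning (the paper invokes Zorn's Lemma rather than compactness for its existence, and indeed reads $\bar{s}$ as a supremum), and the other two inequalities follow from the pointwise bound $d_{\delta}^{1}(x,y)\le d_{\delta}(x,y)$, exactly as in Lemmas~\ref{lemacoro2} and~\ref{Lemacoro}. No further changes are needed.
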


Next, we will consider  these kind of entropies in the context of  regular impulsive semiflows. In order to state our following results, we must introduce the impulsive dynamical systems (for more details see \cite{v1}, \cite{i7}, \cite{i8}, \cite{i9}, \cite{i5} and \cite{i0}).\\
Let   
$$\varphi:\mathbb{R}^+_0\times X\to X$$ be a continuous semiflow on $X$, $D\subset X$ a proper closed subset  and  $I:D\to X$ a continuous function. The \emph{first impulse time function} $\tau_1:X\to\mathbb{R}_0^+\cup\{\infty\}$  is defined by
$$\tau_1(x)=\left\{\begin{array}{ll}\inf\left\{t> 0:\varphi_t(x)\in D\right\} ,& \text{if } \varphi_t(x)\in D\text{ for some }t>0;\\
                                                                       +\infty, & \text{otherwise.}\end{array}\right.$$
If $\tau_1(x)<\infty$, we define the \emph{first impulse point} as
$$x^1=x^1(x)=\varphi_{\tau_1(x)}(x).$$
Inductively, if we have defined $x_n$ and $\tau_n(x)$, and $\tau_n(x)<\infty$ then  the \emph{$(n+1)$-th impulsive time} and \emph{$(n+1)$-th impulsive point} are defined,   respectively, by
$$\tau_{n+1}(x)= \tau_1(x) +\sum_{k=1}^{n}\tau_1(I(x^k)),$$
and 
$$x^{n+1}=x^{n+1}(x)=\varphi_{\tau_1(I(x^n))}(I(x^n)), \text{ if } \tau_1(I(x^n))<\infty.$$
 
Let $T(x)= \sup \{\tau_n(x):n\geq 1\}$. 
The \emph{impulsive drift} $\gamma_x:[0,T(x))\to X$ for a point $x\in X$  is defined inductively 
by
$$\gamma_x(t)=\varphi_t(x), \mbox{ if } t \in [0, \tau_1(x)),$$
and
$$\gamma_x(t)=\varphi_{t-\tau_n(x)}(I(x^n)), \mbox{ if }  t \in [\tau_n(x), \tau_{n+1}(x)) \mbox{ and }\tau_n(x)<\infty.$$
Observe that if $I(D)\cap D=\emptyset$ then $T(x)=\infty$. 

\begin{defi}We say that $(X,\varphi, D, I)$ is an \emph{impulsive dynamical system} if for all $x\in X$ we have:
 \begin{enumerate}
 \item $\tau_1(x)>0$ and
 \item $T(x)=\infty$.
 \end{enumerate}
 \end{defi}
For each impulsive dynamical system $(X,\varphi,D,I)$, we define the  associated \emph{ impulsive semiflow} $\phi: \mathbb{R}_0^+\times X \rightarrow X$  as
$$ \phi_t(x) = \gamma_x(t),$$
where $\gamma_x(t)$ is the impulsive drift for $x\in X$. It is easy to see that $\phi$ is indeed a semiflow (see \cite{i7}), although it is not necessarily continuous. Moreover, when $(X,\varphi, D, I)$ is an impulsive dynamical system, then $\tau_1$ is lower semicontinuous on $X\setminus D$ (see \cite{i5}). \\
For $\eta>0$, we put
$$\begin{array}{rcl}D_{\eta}=\bigcup_{x\in D}\{\varphi_t(x): 0<t<\eta\} &\mbox{and}& X_{\eta}=X\setminus (D\cup D_{\eta}).\end{array}$$
In \cite[Definition 2.1]{JSM} was defined regular impulsive dynamical system. In this work we need to be more specific in the property (2) of this definition.
\begin{defi}\label{RIDS}We say that the impulsive dynamical system $(X,\varphi, D, I)$ is \emph{regular} if $I(D)\cap D=\emptyset$, $I$ is Lipschitz and there exists $\eta>0$ such that
\begin{enumerate}
\item $D_{\xi}$ is open for some  $0<\xi<\eta/4$,
\item $\varphi_{\xi}(D_{\xi})\subset X_{\xi}\setminus \varphi_{\xi}(D)$, and
\item for all $x\in I(D)$ and $t\in(0,\xi]$, $\varphi_t(x)\notin I(D)$.
\end{enumerate}\end{defi}

Note that from definition $D \cap \varphi_\xi (D) =\emptyset$.

The associated impulsive semiflow $\phi: \mathbb{R}_0^+\times X \rightarrow X$ for a the regular impulsive dynamical system  $(X,\varphi, D, I)$ is called  \emph{regular impulsive semiflow}. 
\begin{maintheorem}\label{teo2}
Let $\phi:\mathbb{R}_0^+\times X\to X$ be the regular semiflow associated to a regular impulsive dynamical system $(X,\varphi,D,I)$. Then
$$\bar{h}_{\text{r}}(\phi)=\bar{h}_{\text{s}}(\phi)=\hat{h}_{\text{r}}(\phi)=\hat{h}_{\text{s}}(\phi).$$
\end{maintheorem}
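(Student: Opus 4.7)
From Theorem \ref{teo1} and the inequality $\hat{h}_{\text{r}}(\phi)\le\hat{h}_{\text{s}}(\phi)$ established in \cite{JSM} we already have
\[\bar{h}_{\text{r}}(\phi)\le\bar{h}_{\text{s}}(\phi)\le\hat{h}_{\text{s}}(\phi)\quad\text{and}\quad\bar{h}_{\text{r}}(\phi)\le\hat{h}_{\text{r}}(\phi)\le\hat{h}_{\text{s}}(\phi),\]
so the theorem reduces to proving the single reverse inequality $\hat{h}_{\text{s}}(\phi)\le\bar{h}_{\text{r}}(\phi)$; once this is established, all four quantities collapse to a common value.

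The plan is to factor this estimate through the continuous semiflow provided by \cite{JSM}: a compact metric space $(Y,\tilde d)$, a uniformly continuous bijection $h:X\to Y$, and a continuous semiflow $\tilde\phi$ on $Y$ with $h\circ\phi_t=\tilde\phi_t\circ h$. Theorem \ref{teo0} applied to $\tilde\phi$ yields $\bar{h}_{\text{r}}(\tilde\phi)=\bar{h}_{\text{s}}(\tilde\phi)=h_{\text{top}}(\tilde\phi)$, so it will be enough to prove the sandwich $\hat{h}_{\text{s}}(\phi)\le h_{\text{top}}(\tilde\phi)\le\bar{h}_{\text{r}}(\phi)$. The left-hand inequality will come from showing that the $h$-image of an almost-maximal $(\phi,T,\varepsilon,\delta)$-separated set for $d_\delta$ is $(\tilde\phi,T,\varepsilon')$-separated in $\tilde d$ for some $\varepsilon'=\varepsilon'(\varepsilon,\delta)>0$ independent of $T$. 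The right-hand inequality will come from pulling a $(\tilde\phi,T,\varepsilon)$-spanning set in $Y$ back through $h^{-1}$ and checking that it is $(\phi,T,\varepsilon',\delta)$-spanning for $d_\delta^1$; here the infimum over independent time shifts $s_1,s_2\in[0,\delta)$ in the definition of $d_\delta^1$ is exactly what compensates for the possibly different impulsive jump times of the two orbits being compared.

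The delicate point in both halves of the sandwich is the pairing of the impulsive instants of two nearby $\phi$-orbits. Two orbits starting close may jump at slightly shifted times $\tau_k(x)\ne\tau_k(y)$, so on small time windows their $d$-distance can be of order $\mathrm{diam}(X)$ while their $h$-images stay $\tilde d$-close. The three clauses of Definition \ref{RIDS} are tailored to force a uniform control on this shift: clause (1) makes $D_\xi$ open so jump times vary lower-semicontinuously in a uniform way; clause (2), $\varphi_\xi(D_\xi)\subset X_\xi\setminus\varphi_\xi(D)$, prevents a second impulse within time $\xi$ after the first and guarantees a uniform gap between $D$ and $\varphi_\xi(D)$; clause (3) provides the analogous escape from $I(D)$. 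Combined with the Lipschitz hypothesis on $I$, these produce a uniform $\delta_0>0$ such that, for $\delta<\delta_0$, the jump times of two sufficiently close orbits on $[0,T]$ admit a bijective pairing with time offsets smaller than $\delta$. Once this pairing is set up, the sandwich reduces to a uniformly continuous comparison on the impulse-free region $X_\xi$, where the standard entropy-transfer arguments used in the proof of Theorem \ref{teo0} apply.
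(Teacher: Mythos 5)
Your reduction is the right one: by Theorem \ref{teo1} and \cite{JSM} everything collapses once $\hat{h}_{\text{s}}(\phi)\le\bar{h}_{\text{r}}(\phi)$ is shown, and sandwiching through $h_{\text{top}}(\tilde{\phi})$ is exactly the paper's strategy. But the execution plan has two genuine gaps. First, the semiconjugating bijection is $H:X_{\xi}\to Y$, not $h:X\to Y$: the quotient map $\pi$ is not injective on $D\cup D_{\xi}$, so your pulled-back spanning set $h^{-1}(G)$ lives in $X_{\xi}$ and you must still explain why spanning $X_{\xi}$ is enough, i.e.\ why points of $D\cup D_{\xi}$ are shadowed. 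This is precisely the content of the paper's Proposition \ref{r2} ($\bar{h}_{\text{r}}(\phi|_{X_{\xi}})\le\bar{h}_{\text{r}}(\phi)$), whose proof is the technical core of the section: it replaces each $y$ in a spanning set of $X$ by $y'=\phi_{t(y)}(y)$ with $t(y)<2\delta$ the entry time into $X_{\xi}$, and the neighborhoods $V_1,V_2,V_3$ from Lemmas \ref{af2}--\ref{af3} are needed to rule out the bad configurations. Your proposal never addresses this mismatch of domains.

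Second, both halves of your sandwich run the continuity in the wrong direction. Uniform continuity of $H$ lets you push spanning sets \emph{forward} (image of a spanning set is spanning) and pull separated sets \emph{back}; you propose to push separated sets forward ($\hat{h}_{\text{s}}(\phi)\le h_{\text{top}}(\tilde{\phi})$) and pull spanning sets back ($h_{\text{top}}(\tilde{\phi})\le\bar{h}_{\text{r}}(\phi)$), and both of these require quantitative lower bounds of the form $d_{\delta}(x,y)\ge\varepsilon\Rightarrow\tilde{d}(H(x),H(y))\ge\varepsilon'$, i.e.\ a modulus of continuity for $H^{-1}$ — which fails uniformly, since the discontinuities of the impulsive flow are encoded exactly in the non-uniform-continuity of $H^{-1}$. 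The paper avoids this: the inequality $\hat{h}_{\text{s}}(\phi)= h_{\text{top}}(\tilde{\phi})$ is quoted from \cite[Theorem 3]{JSM} rather than reproved, and $h_{\text{top}}(\tilde{\phi})\le\bar{h}_{\text{r}}(\phi)$ is obtained by the forward-direction Lemma \ref{semiconj} composed with Proposition \ref{r2}. Finally, your key asserted estimate — that regularity yields a uniform $\delta_0$ so that the jump times of two close orbits on all of $[0,T]$ pair up with offsets below $\delta$ — is not proved and is not obviously true, since offsets can accumulate over many jumps; note that the paper never needs such a global pairing, only a one-time adjustment $t(y)<2\delta$ of the initial point.
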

According this result, we can define \textit{the topological entropy for a regular impulsive semiflow} as $$\bar{h}_{top}(\phi):=\bar{h}_{\text{r}}(\phi).$$
The main result of this paper relates the topological entropy $\bar{h}_{\text{top}}(\phi)$ of a regular impulsive semiflow with the metric entropies $h_{\mu}(\phi_1)$, where $\phi_1$ is the time one map induced by $\phi$ (see \cite{kolmogorov}). In order to state our result, we must first give some definitions.\\
A map between two topological spaces is called \textit{measurable} if the pre-image of any Borel set is a Borel set. Notice that a semiflow $\phi: \mathbb{R}_0^+\times X \rightarrow X$ is measurable if for each $t\geq0$, $\phi_t:X\to X$ is measurable. Observe \cite[Proposition 5.1]{v1} proved that the semiflows asociated to an impulsive dynamical systems $(X,\varphi,D,I)$, are measurables.\\
A probability measure $\mu$ on the Borel sets of the topological space $X$ is said to be \textit{$\phi$-invariant} if $\phi$ is measurable and
$$\mu(\phi_t^{-1}(A))=\mu(A)$$
for  every Borel set $A\subset X$ and every $t\geq0$. We denote by $\mathcal{M}(X)$ the set of all probability
measures on the Borel sets of $X$ and by $\mathcal{M}_{\phi}(X)$ the set of $\phi$-invariant measures.\\
Given a measurable map $H:X\to Y$ between two topological spaces $X$ and $Y$,  the push-forward map
$H_*:\mathcal{M}(X)\to\mathcal{M}(Y); \mu \to H_*\mu$ is defined  by  $H_*\mu(A) = \mu(H^{-1}(A))$, for any Borel set $A\subset Y$. Observe that $\mu$ belongs to $\mathcal{M}_{\phi}(X)$ if and only if $(\phi_t)_*\mu=\mu$ for all $t\geq0$.\\
Let $\mu$ and $\nu$ be two probability measures, invariant under measurable transformations $f:X\to X$ and $g:Y\to Y$, respectively. We say that the systems $(f,\mu)$ and $(g,\nu)$ are \textit{ergodically equivalent} if one can find measurable sets $M\subset X$ and $N\subset Y$ with $\mu(X\setminus N)=0$ and $\nu(Y\setminus N)=0$, and a measurable bijection $H:X\to Y$ with measurable inverse, such that
$$H_*\mu=\nu \,\,\, \text{ and } \,\,\, H\circ f= g\circ H.$$
The main result of this paper is the following.
\begin{maintheorem}\label{teo3}
\begin{upshape}
Let $\phi:\mathbb{R}_0^+\times X\to X$ be the regular impulsive semiflow associated to a regular impulsive dynamical system $(X,\varphi,D,I)$.
Then $ \mathcal{M}_{\phi}(X)\neq \emptyset$ and
$$\bar{h}_{top}(\phi)=\sup_{\mu\in \mathcal{M}_{\phi}(X)}\{h_{\mu}(\phi_1)\}$$
\end{upshape}
\end{maintheorem}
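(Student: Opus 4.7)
The strategy is to reduce everything to the classical (continuous) variational principle via the semiconjugacy already used in \cite{JSM}. Recall that for a regular impulsive dynamical system $(X,\varphi,D,I)$, the cited paper constructs a compact metric space $(\tilde X,\tilde d)$, a continuous semiflow $\tilde\phi:\mathbb{R}_0^+\times\tilde X\to\tilde X$ and a uniformly continuous bijection $H:X\to\tilde X$ such that $H\circ\phi_t=\tilde\phi_t\circ H$ for every $t\geq 0$. The proof will run along the chain
\[
\bar h_{\text{top}}(\phi)\;=\;h_{\text{top}}(\tilde\phi)\;=\;\sup_{\nu\in\mathcal{M}_{\tilde\phi}(\tilde X)}h_\nu(\tilde\phi_1)\;=\;\sup_{\mu\in\mathcal{M}_\phi(X)}h_\mu(\phi_1),
\]
each equality to be justified separately.

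First I would establish the topological identity $\bar h_{\text{top}}(\phi)=h_{\text{top}}(\tilde\phi)$. The key observation is that the refined family $\{d_\delta^{1}\}_{\delta>0}$ is designed precisely so that, as $\delta\to 0^+$, distances $d_\delta^1(x,y)$ approximate $\tilde d(H(x),H(y))$ up to controlled errors coming from the impulsive drift on $D_\xi$. Using regularity (Definition~\ref{RIDS}) together with uniform continuity of $H$ and $H^{-1}$ restricted to $X_\xi$, one can show that for every $\varepsilon>0$ there exist $\delta_0>0$ and $\varepsilon'>0$ (both tending to $0$ with $\varepsilon$) such that any $(\tilde\phi,T,\varepsilon',\tilde d)$-spanning set in $\tilde X$ pulls back through $H^{-1}$ to a $(\phi,T,\varepsilon,\delta)$-spanning set for $d_\delta^1$, and conversely. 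Taking the appropriate limits in $\varepsilon$ and $\delta$, and invoking Theorem~\ref{teo0} applied to $\tilde\phi$, this gives $\bar h_r(\phi)=h_{\text{top}}(\tilde\phi)$; Theorem~\ref{teo2} then yields $\bar h_{\text{top}}(\phi)=h_{\text{top}}(\tilde\phi)$.

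Next I would apply the classical variational principle to the continuous semiflow $\tilde\phi$ on the compact metric space $\tilde X$, obtaining $h_{\text{top}}(\tilde\phi)=\sup_{\nu\in\mathcal{M}_{\tilde\phi}(\tilde X)}h_\nu(\tilde\phi_1)$. Nonemptiness of $\mathcal{M}_{\tilde\phi}(\tilde X)$ follows from Krylov--Bogolyubov; pulling any such $\nu$ back by $H^{-1}$ (which is a measurable map, since by \cite[Proposition~5.1]{v1} the impulsive semiflow is Borel-measurable and the semiconjugacy respects Borel structure) produces $\mu=(H^{-1})_*\nu\in\mathcal{M}_\phi(X)$, settling nonemptiness. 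For the last equality I would show that $H_*$ gives a bijection between $\mathcal{M}_\phi(X)$ and $\mathcal{M}_{\tilde\phi}(\tilde X)$ with $h_\mu(\phi_1)=h_{H_*\mu}(\tilde\phi_1)$; this is the definition of ergodic equivalence of $(\phi_1,\mu)$ and $(\tilde\phi_1,H_*\mu)$, and metric entropy is an isomorphism invariant in the sense of Kolmogorov, so the supremum transfers.

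The main obstacle will be the first step: controlling the pseudosemimetric $d_\delta^1$ in a quantitative way by the metric $\tilde d$ on the quotient space $\tilde X$, uniformly in time along the orbit. Orbit points may enter $D$, and near such points distances in $X$ differ drastically from distances in $\tilde X$; the role of $d_\delta^1$ is precisely to absorb this discrepancy by allowing a small time-shift $s_i\in[0,\delta)$. Making the comparison work along arbitrarily long time intervals $[0,T]$ requires checking that the shifts can be chosen uniformly in $t\in[0,T]$ once $\delta$ is small compared to $\xi$, and that conditions (2)--(3) of Definition~\ref{RIDS} prevent jumps from accumulating. Once these estimates are in place, the remaining measure-theoretic identifications are routine since $H$ is a measurable bijection with measurable inverse intertwining $\phi_1$ with $\tilde\phi_1$.
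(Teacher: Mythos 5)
Your overall strategy---reduce to the classical variational principle for the continuous semiflow $\tilde\phi$ and transfer invariant measures through the semiconjugacy---is the same as the paper's, and the first two links of your chain are fine. In fact the identity $\bar h_{\text{top}}(\phi)=h_{\text{top}}(\tilde\phi)$ requires no new metric estimates at all: it is immediate from Theorem~\ref{teo2} combined with \cite[Theorem 3]{JSM} (which gives $h_{\text{top}}(\tilde\phi)=\hat h_{\text{s}}(\phi)$), so the quantitative comparison of $d_\delta^1$ with $\tilde d$ that you single out as ``the main obstacle'' is already available and need not be redone. The genuine gap lies in the last link. You assert that $H$ is a uniformly continuous bijection from $X$ onto $\tilde X$; it is not. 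The map constructed in \cite{JSM} is $H=\pi|_{X_\xi}\colon X_\xi\to Y=\pi(X_\xi)$, a bijection only from the proper subset $X_\xi=X\setminus(D\cup D_\xi)$. Consequently $H_*$ identifies $\mathcal{M}_{\tilde\phi}(Y)$ with $\mathcal{M}_{\phi}(X_\xi)$, not with $\mathcal{M}_\phi(X)$, and your final equality $\sup_{\nu}h_\nu(\tilde\phi_1)=\sup_{\mu\in \mathcal{M}_\phi(X)}h_\mu(\phi_1)$ does not follow: since $i_*$ embeds $\mathcal{M}_\phi(X_\xi)$ into $\mathcal{M}_\phi(X)$, a priori the supremum over all of $\mathcal{M}_\phi(X)$ could be strictly larger, realized by invariant measures charging $D\cup D_\xi$, which are invisible to $H$.

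The missing ingredient is the paper's Lemma~\ref{medidacero}: every $\mu\in\mathcal{M}_\phi(X)$ satisfies $\mu(D)=\mu(D_\xi)=0$. This is proved by Poincar\'e recurrence together with the hypothesis $I(D)\cap D=\emptyset$: a point of $D$ (respectively of $D_\xi$) never returns to $D$ (respectively to $D_\xi$) under the impulsive semiflow, so almost every point of a positive-measure subset of $D\cup D_\xi$ would have to recur yet cannot, forcing the measure of these sets to vanish. With this in hand, restriction to $X_\xi$ identifies $\mathcal{M}_\phi(X)$ with $\mathcal{M}_\phi(X_\xi)$, the map $(i\circ H^{-1})_*$ becomes a bijection onto $\mathcal{M}_\phi(X)$ (the paper's Lemma~\ref{push1}), and only then does the ergodic-equivalence and entropy-invariance argument you invoke close the proof. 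Your argument for nonemptiness of $\mathcal{M}_\phi(X)$ survives the correction (push a Krylov--Bogolyubov measure for $\tilde\phi$ forward by $i\circ H^{-1}$), but as written the proposal establishes $\bar h_{\text{top}}(\phi)=\sup_{\mu\in\mathcal{M}_\phi(X_\xi)}h_\mu(\phi_1)$ and leaves the passage to $\mathcal{M}_\phi(X)$ unjustified.
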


\section{Proof Theorem  \ref{teo0}}
Let us consider $\phi:\mathbb{R}_0^+\times X\to X$ a continuous semiflow on the compact metric space $(X,d)$. Before proving Theorem \ref{teo0}, we briefly recall Bowen's definition of topological entropy. Given $\varepsilon>0$ and $T>0$, a  set $F\subseteq X$ is said to be\emph{ $(\phi,T,\varepsilon)$-spanning} if for all $x\in X$, there exists $y\in F$ such that for all $t\in[0,T]$,
$d(\phi_t(x),\phi_t(y))<\varepsilon.$ A set $E\subseteq X$ is said to be\emph{ $(\phi,T,\varepsilon)$-separated} if for all $x,y\in E$, $x\neq y$, there exists $t\in[0,T]$ such that
$d(\phi_t(x),\phi_t(y))\geq\varepsilon.$ Puts
$$r(\phi,T,\varepsilon)=\inf\{|F|: F \mbox{ is } (\phi,T,\varepsilon)\mbox{-spanning}\}$$
and
$$ s(\phi,T,\varepsilon)=\sup\{|E|: E \mbox{ is } (\phi,T,\varepsilon)\mbox{-separated}\}.$$ 
Let us consider the topological entropies defined by Bowen as
$$h_\text{r}(\phi)=\lim_{\varepsilon\to0^+}\limsup_{T\to+\infty}\frac{1}{T}\log r(\phi,T,\varepsilon).$$
and
$$h_\text{s}(\phi)=\lim_{\varepsilon\to0^+}\limsup_{T\to+\infty}\frac{1}{T}\log s(\phi,T,\varepsilon).$$
In \cite{bowen}, Bowen showed that if $X$ is  compact and $\phi$ is continuous, then $h_{\text{r}}(\phi) = h_{\text{s}}(\phi).$
The topological entropy $h_{top}(\phi)$ is defined by these numbers.
Theorem \ref{teo0} will follow from the following two lemmas.

\begin{lema}\label{A1}
Let $\phi:\mathbb{R}_0^+\times X\to X$ be a semiflow (not necessarily continuous) on the compact metric space $(X,d)$. Then
\begin{eqnarray*}\label{111}
 \bar{h}_{\text{r}}(\phi)\leq h_{\text{r}}(\phi) \, \text{ and } \, \bar{h}_{\text{s}}(\phi)\leq h_{\text{s}}(\phi).
\end{eqnarray*}
\end{lema}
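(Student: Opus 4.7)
The key observation driving the proof is the pointwise inequality
$$d_{\delta}^{1}(x,y)\;\leq\;d(x,y)\qquad\text{for all }x,y\in X.$$
This is immediate from the definition of $d_{\delta}^{1}$ as an infimum over $s_1,s_2\in[0,\delta)$, since taking $s_1=s_2=0$ gives $d(x,y)$ as one admissible value. I would state and record this at the very start of the proof as a standalone observation; everything else is a matter of transporting this inequality through the definitions of spanning/separated sets.

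For the spanning inequality, suppose $F\subseteq X$ is $(\phi,T,\varepsilon)$-spanning in the classical Bowen sense. Then for each $x\in X$ there is $y\in F$ with $d(\phi_t(x),\phi_t(y))<\varepsilon$ for every $t\in[0,T]$. Applying the key observation to the pair $(\phi_t(x),\phi_t(y))$ yields $d_{\delta}^{1}(\phi_t(x),\phi_t(y))\leq d(\phi_t(x),\phi_t(y))<\varepsilon$, so the same set $F$ is $(\phi,T,\varepsilon,\delta)$-spanning for $d_{\delta}^{1}$. Therefore $\bar{r}(\phi,T,\varepsilon,\delta)\leq r(\phi,T,\varepsilon)$, and taking $\tfrac{1}{T}\log$, then $\limsup_{T\to\infty}$, then $\varepsilon\to0^{+}$ and finally $\delta\to0^{+}$ gives $\bar{h}_{\text{r}}(\phi)\leq h_{\text{r}}(\phi)$.

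For the separated inequality the direction is reversed at the level of sets but the same inequality does the job. If $E\subseteq X$ is $(\phi,T,\varepsilon,\delta)$-separated for $d_{\delta}^{1}$, then for any distinct $x,y\in E$ there is $t\in[0,T]$ with $d_{\delta}^{1}(\phi_t(x),\phi_t(y))\geq\varepsilon$, and the key observation upgrades this to $d(\phi_t(x),\phi_t(y))\geq\varepsilon$. Hence $E$ is $(\phi,T,\varepsilon)$-separated in the classical sense, and (reading $\bar{s}$ as the supremum of cardinalities of separated sets, which is the only definition that makes the entropy nontrivial) this gives $\bar{s}(\phi,T,\varepsilon,\delta)\leq s(\phi,T,\varepsilon)$, which passes through the same chain of limits to produce $\bar{h}_{\text{s}}(\phi)\leq h_{\text{s}}(\phi)$.

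There is really no obstacle here: the lemma is a straightforward comparison, whose entire content is the trivial domination $d_{\delta}^{1}\leq d$ followed by monotonicity of infima, suprema, $\log$, and the $\limsup$. The substantive inequalities—the matching lower bounds $h_{\text{r}}(\phi)\leq\bar{h}_{\text{r}}(\phi)$ and $h_{\text{s}}(\phi)\leq\bar{h}_{\text{s}}(\phi)$ that complete Theorem~\ref{teo0}—will be the genuine work, and will presumably use uniform continuity of $\phi$ on the compact set $[0,T+\delta]\times X$ to approximate $d_{\delta}^{1}$ by $d$ up to a small error; but that belongs to the companion lemma, not to this one.
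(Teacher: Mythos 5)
Your argument is correct and is essentially the paper's proof: both rest on the pointwise domination $d_{\delta}^{1}\leq d$ (the paper phrases the two implications as contradictions, you state them directly), from which a Bowen spanning set remains spanning for $d_{\delta}^{1}$ and a $d_{\delta}^{1}$-separated set remains Bowen separated. Your parenthetical reading of $\bar{s}$ as a supremum of cardinalities is also the right one -- the ``$\inf$'' in the paper's definition of $\bar{s}(\phi,T,\varepsilon,\delta)$ is a typo, as the paper itself uses the supremum reading when it writes $|E|\leq s(\phi,T,\varepsilon)$ and concludes.
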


\begin{proof}
Let consider $\delta>0$, $\varepsilon>0$ and $T>0$. First, consider $F$ a $(\phi,T,\varepsilon)$-spanning set, we claim that $F$ is $(\phi,T,\varepsilon,\delta)$-spanning  for the pseudosemimetric $d_{\delta}^1$. Indeed, suppose that there exists $x\in X$ such that for all $y\in F$, there exists $t_0\in[0,T]$ such that $d_{\delta}^1(\phi_{t_0}(x),\phi_{t_0}(y))\geq \varepsilon$. Particularity $d(\phi_{t_0}(x),\phi_{t_0}(y))\geq \varepsilon,$ which is a contradiction.\\
Second, if $E$ is a $(\phi,T,\varepsilon,\delta)$-separated set for the pseudosemimetric $d_{\delta}^1$, we claim that $E$ is $(\phi,T,\varepsilon)$-separated. Indeed, suppose that there exist $x,y\in E$, $x\neq y$ such that for all $t\in[0,T]$,
$d(\phi_{t}(x),\phi_{t}(y))< \varepsilon$. Particularity $d_{\delta}^1(\phi_{t}(x),\phi_{t}(y))< \varepsilon,$
which is another contradiction. Therefore
$$\bar{r}(\phi,T,\varepsilon,\delta)\leq |F|$$
and
$$|E|\leq s(\phi,T,\varepsilon).$$
Hence
$$\frac{1}{T}\displaystyle\log \bar{r}(\phi,T,\varepsilon,\delta)\leq \frac{1}{T}\displaystyle\log r(\phi,T,\varepsilon)$$
and
$$\frac{1}{T}\displaystyle\log \bar{s}(\phi,T,\varepsilon,\delta)\leq \frac{1}{T}\displaystyle\log s(\phi,T,\varepsilon).$$
Taking limits, we obtain the inequalities in the Lemma. 
\end{proof}

\begin{lema}\label{A2}
Let $\phi:\mathbb{R}_0^+\times X\to X$ be a continuous semiflow on the compact metric space $(X,d)$. Then
\begin{eqnarray*}\label{1}
\bar{h}_{\text{r}}(\phi)\geq h_{\text{r}}(\phi) \, \text{ and } \, \bar{h}_{\text{s}}(\phi)\geq h_{\text{s}}(\phi).\end{eqnarray*}
\end{lema}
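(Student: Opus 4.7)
The plan is to exploit the continuity of $\phi$ to compare the pseudosemimetric $d_\delta^1$ with the original metric $d$, at the cost of shrinking $\varepsilon$ by a small amount. The key auxiliary fact is a \emph{uniform equicontinuity at $s=0$}: since $\phi$ is continuous on the compact set $[0,1]\times X$, for every $\alpha>0$ there exists $\delta_0=\delta_0(\alpha)>0$ such that
\[
d(\phi_s(z),z)<\alpha\qquad\text{for all } z\in X,\ s\in[0,\delta_0].
\]
This is the step that forces the hypothesis of continuity and will replace the trivial estimate $d_\delta^1\le d$ used in Lemma \ref{A1}.

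For the spanning inequality, I would fix $\varepsilon>0$, pick $\delta<\delta_0(\varepsilon/4)$, and show that any $(\phi,T,\varepsilon/2,\delta)$-spanning set $F$ for $d_\delta^1$ is automatically $(\phi,T,\varepsilon)$-spanning in the classical sense. Indeed, for $x\in X$ we get $y\in F$ and for each $t\in[0,T]$ values $s_1,s_2\in[0,\delta)$ with $d(\phi_{t+s_1}(x),\phi_{t+s_2}(y))<\varepsilon/2$; the triangle inequality together with the equicontinuity estimate applied to $\phi_t(x)$ and $\phi_t(y)$ yields
\[
d(\phi_t(x),\phi_t(y))\le d(\phi_t(x),\phi_{s_1}(\phi_t(x)))+d(\phi_{t+s_1}(x),\phi_{t+s_2}(y))+d(\phi_{s_2}(\phi_t(y)),\phi_t(y))<\tfrac{\varepsilon}{4}+\tfrac{\varepsilon}{2}+\tfrac{\varepsilon}{4}=\varepsilon.
\]
Consequently $r(\phi,T,\varepsilon)\le \bar r(\phi,T,\varepsilon/2,\delta)$, so
\[
h_{\text{r}}(\phi,\varepsilon)\le \bar h_{\text{r}}(\phi,\varepsilon/2,\delta)\le \bar h_{\text{r}}(\phi,\delta)\le \bar h_{\text{r}}(\phi),
\]
and letting $\varepsilon\to 0^+$ gives $h_{\text{r}}(\phi)\le \bar h_{\text{r}}(\phi)$.

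For the separated inequality, with the same choice $\delta<\delta_0(\varepsilon/4)$, I would take any classically $(\phi,T,\varepsilon)$-separated set $E$ and show that it is $(\phi,T,\varepsilon/2,\delta)$-separated for $d_\delta^1$. If $x\neq y$ are in $E$ and $t\in[0,T]$ witnesses $d(\phi_t(x),\phi_t(y))\ge\varepsilon$, then for every $s_1,s_2\in[0,\delta)$ the reverse triangle inequality combined with the equicontinuity estimate gives
\[
d(\phi_{t+s_1}(x),\phi_{t+s_2}(y))\ge d(\phi_t(x),\phi_t(y))-\tfrac{\varepsilon}{4}-\tfrac{\varepsilon}{4}\ge \tfrac{\varepsilon}{2},
\]
so $d_\delta^1(\phi_t(x),\phi_t(y))\ge\varepsilon/2$. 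Thus $s(\phi,T,\varepsilon)\le \bar s(\phi,T,\varepsilon/2,\delta)$ and the argument closes exactly as before, yielding $h_{\text{s}}(\phi)\le \bar h_{\text{s}}(\phi)$.

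The only real obstacle is the uniform equicontinuity lemma, but it is standard: continuity of $\phi$ at every point $(0,x)$ together with compactness of $X$ and a finite-cover argument produces the uniform $\delta_0$. Everything else is an $\varepsilon/4$-splitting of the triangle inequality; the cleanest packaging is to state the equicontinuity as a preliminary observation and then handle the spanning and separated cases in parallel.
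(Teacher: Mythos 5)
Your proposal is correct and follows essentially the same route as the paper: the paper's preliminary estimate ($u\in[t,t+\beta]\Rightarrow d(\phi_t(z),\phi_u(z))<\alpha/4$ for all $z$ and $t$) is exactly your uniform equicontinuity at $s=0$ applied to $\phi_t(z)$, and both arguments then run the same $\varepsilon/4$-triangle-inequality comparison to show that a $d_\delta^1$-spanning set is classically spanning and a classically separated set is $d_\delta^1$-separated. The only cosmetic differences are your direct phrasing versus the paper's argument by contradiction and a different bookkeeping of the constants.
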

\begin{proof}
Since $\phi$ is a continuous  and $X$ is a compact metric space, for all $\alpha>0$ there exists $\beta=\beta(\alpha)>0$ such that for all $z\in X$ and $t\geq 0$, we have
$$u\in [t,t+\beta]\Rightarrow d(\phi_t(z),\phi_u(z))<\alpha/4.$$
Let consider $\delta$, $\varepsilon$ and $T$ positves numbers, with $\delta<\beta$ and $\varepsilon<\alpha/2$. First, consider $F$ a $(\phi,T,\varepsilon,\delta)$-spanning set for the pseudosemimetric $d_{\delta}^1$, we claim that $F$ is $(\phi,T,\alpha)$-spanning. Indeed, suppose that there exists $x\in X$ such that for all $y\in F$, there exists $t_0\in[0,T]$ such that $d(\phi_{t_0}(x),\phi_{t_0}(y))\geq \alpha.$ By the triangle inequality, we have  
$$d(\phi_{t_0}(x),\phi_{t_0}(y))\leq d(\phi_{t_0}(x),\phi_u(x))+d(\phi_u(x),\phi_s(y))+d(\phi_s(y),\phi_{t_0}(y)),$$
and for all $u,s\in[t_0,t_0+\beta)$, we have
$$d(\phi_u(x),\phi_s(y))>\alpha/2.$$
Since $\delta<\beta$ and $\varepsilon<\alpha/2$, then
$$d_{\delta}^1(\phi_{t_0}(x),\phi_{t_0}(y))> \varepsilon,$$
which is a contradiction. 
Therefore
$$|F|\geq r(\phi,T,\alpha).$$
Second, if $E$ is a $(\phi,T,\alpha)$-separated set, we claim that $E$ is $(\phi,T,\varepsilon,\delta)$-separated for the pseudosemimetric $d_{\delta}^1$. Indeed, suppose that there exist $x,y\in E$, $x\neq y$ such that for all $t\in[0,T]$, $d_{\delta}^1(\phi_{t}(x),\phi_{t}(y))< \varepsilon.$ 
Again, by the triangle inequality, for all $u,s\in[t,t+\beta)$, we have 
$$d(\phi_{t}(x),\phi_{t}(y))\leq \alpha/2+d(\phi_u(x),\phi_s(y)).$$
Since $\delta<\beta$ and $\varepsilon<\alpha/2$, implies that for all $t\in[0,T]$
$$d(\phi_{t}(x),\phi_{t}(y))< \alpha,$$
which is a contradiction.
Therefore
$$\bar{s}(\phi,T,\varepsilon,\delta)\geq |E|.$$
Hence
$$\frac{1}{T}\displaystyle\log \bar{r}(\phi,T,\varepsilon,\delta)\geq \frac{1}{T}\displaystyle\log r(\phi,T,\alpha)$$
and
$$\frac{1}{T}\displaystyle\log \bar{s}(\phi,T,\varepsilon,\delta)\geq \frac{1}{T}\displaystyle\log s(\phi,T,\alpha).$$
Taking limits, we obtain the inequalities in the Lemma.
\end{proof}

\begin{proof}[ Proof of Theorem \ref{teo0}]
Put together the inequalities in Lemmas \ref{A1} and \ref{A2}, we get the result.
\end{proof}

\section{Proof Theorem \ref{teo1}}
Let us consider $\phi:\mathbb{R}_0^+\times X\to X$ a non necessarily continuous semiflow on the compact metric space $(X,d)$. Before proving Theorem \ref{teo1}, we briefly recall two definitions of topological entropy given in \cite{JSM}. For $\delta>0$, we define the pseudosemimetric $d_{\delta}^{\phi}=d_{\delta}:X\times X\to\mathbb{R}_0^+$  as
\begin{eqnarray*}\label{parametric}
d_{\delta}(x,y)=\inf\{d(\phi_s(x),\phi_s(y)):s\in[0,\delta)\}.
\end{eqnarray*} 
Given $\varepsilon>0$ and $T>0$, a set $F\subseteq X$ is said to be\emph{ $(\phi,T,\varepsilon,\delta)$-spanning} for the pseudosemimetric $d_{\delta}$ ,if for all $x\in X$ there exists $y\in F$ such that for all $t\in[0,T]$,
$d_{\delta}(\phi_t(x),\phi_t(y)<\varepsilon.$ 
A set $E\subseteq X$ is said to be\emph{ $(\phi,T,\varepsilon,\delta)$-separated} for the pseudosemimetric $d_{\delta}$, if for all $x,y\in E$, $x\neq y$, there exists $t\in[0,T]$ such that
$d_{\delta}(\phi_t(x),\phi_t(y))\geq\varepsilon.$
Puts
$$\hat{r}(\phi,T,\varepsilon,\delta)=\inf\{|F|: F \mbox{ is } (\phi,T,\varepsilon,\delta)\mbox{-spanning for } d_{\delta}\}.$$
and
$$ \hat{s}(\phi,T,\varepsilon,\delta)=\sup\{|E|: E \mbox{ is } (\phi,T,\varepsilon,\delta)\mbox{-separated for } d_{\delta}\}.$$ 
Define the entropies using spanning set and separated set, respectively, by
$$\hat{h}_\text{r}(\phi)=\lim_{\delta\to0^+}\lim_{\varepsilon\to0^+}\limsup_{T\to+\infty}\frac{1}{T}\log \hat{r}(\phi,T,\varepsilon,\delta).$$
and
$$\hat{h}_\text{s}(\phi)=\lim_{\delta\to0^+}\lim_{\varepsilon\to0^+}\limsup_{T\to+\infty}\frac{1}{T}\log \hat{s}(\phi,T,\varepsilon,\delta).$$
Recall that in \cite[Theorem 2]{JSM} was prove that the $\tau$- entropy as defined in \cite{v2},where $\tau$  is an  admissible function on $X$, satisfies  $$\hat{h}_{\text{r}}(\phi)\leq\hat{h}_{\text{s}}(\phi) \leq h_{\text{top}}^{\tau}(\phi).$$ 
Theorem \ref{teo1} will follow from the following lemmas.
\begin{lema}\label{lemacoro2}
Let $\phi:\mathbb{R}_0^+\times X\to X$ be a semiflow on the compact metric space $(X,d)$. Then
\begin{eqnarray*}
\bar{h}_{\text{r}}(\phi)\leq \bar{h}_{\text{s}}(\phi) .\end{eqnarray*}
\end{lema}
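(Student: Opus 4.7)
The plan is to adapt the classical Bowen argument, which says that a maximal separated set is automatically spanning, to the pseudosemimetric setting. The conclusion is monotone in $\varepsilon$ and $\delta$ and survives all the limit operations, so the key step is a cardinality comparison at fixed parameters.

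Concretely, I would fix $\delta>0$, $\varepsilon>0$ and $T>0$, and distinguish two cases. If $\bar{s}(\phi,T,\varepsilon,\delta)=+\infty$ then the desired inequality $\bar{r}(\phi,T,\varepsilon,\delta)\le \bar{s}(\phi,T,\varepsilon,\delta)$ is automatic. Otherwise, choose a $(\phi,T,\varepsilon,\delta)$-separated set $E\subset X$ for $d_\delta^1$ whose cardinality realizes $\bar{s}(\phi,T,\varepsilon,\delta)$ (here I am reading the definition of $\bar{s}$ as a supremum in the usual Bowen sense; the formula in the paper should be $\sup$ rather than $\inf$). Such an $E$ is in particular maximal: no strictly larger set remains separated.

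The next step is to check that this maximal $E$ is also $(\phi,T,\varepsilon,\delta)$-spanning for $d_\delta^1$. Given any $x\in X\setminus E$, maximality implies that $E\cup\{x\}$ fails to be separated. Since pairs inside $E$ are already separated, the failure must involve $x$: there exists $y\in E$ such that for every $t\in[0,T]$ one has $d_\delta^1(\phi_t(x),\phi_t(y))<\varepsilon$. For $x\in E$ we can take $y=x$ and use $d_\delta^1(x,x)=0$. In both cases $x$ is $\varepsilon$-shadowed by $E$ along $[0,T]$, so $E$ is spanning and therefore
\[
\bar{r}(\phi,T,\varepsilon,\delta)\le |E|=\bar{s}(\phi,T,\varepsilon,\delta).
\]

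Finally, I divide by $T$ and take $\limsup_{T\to\infty}$ to obtain $\bar{h}_{\mathrm{r}}(\phi,\varepsilon,\delta)\le \bar{h}_{\mathrm{s}}(\phi,\varepsilon,\delta)$; then sending $\varepsilon\to0^+$ and $\delta\to0^+$ preserves the inequality and yields $\bar{h}_{\mathrm{r}}(\phi)\le \bar{h}_{\mathrm{s}}(\phi)$. There is no serious obstacle here: the argument is purely combinatorial and never uses the triangle inequality, so the fact that $d_\delta^1$ is only a pseudosemimetric (and that $\phi$ is not assumed continuous) causes no trouble. The only minor point to watch is the infinite-cardinality case, which I handle separately as above.
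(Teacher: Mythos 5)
Your proof is correct and follows essentially the same route as the paper: a maximal $(\phi,T,\varepsilon,\delta)$-separated set for $d_\delta^1$ is automatically spanning, giving $\bar{r}(\phi,T,\varepsilon,\delta)\le\bar{s}(\phi,T,\varepsilon,\delta)$ before passing to the limits. The only cosmetic difference is that the paper extracts a maximal separated set via Zorn's lemma rather than via a maximum-cardinality set, and your reading of the $\inf$ in the definition of $\bar{s}$ as a typo for $\sup$ is the correct interpretation (it matches the definition of $\hat{s}$ later in the paper).
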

\begin{proof}
Let us consider $\delta>0$, $\varepsilon >0$ and $T>0$. Since the union of a partially ordered family (by set inclusion)  of separated sets is separated set, by Zorn's Lemma, we can take a maximal $(\phi, T,\varepsilon, \delta)$-separated set $E$ for $d_{\delta}^1$. We claim that $E$ is $(\phi, T,\varepsilon, \delta)$-spanning for the $d_{\delta}^1$. Indeed, suppose that  $E$ is not $(\phi,T,\varepsilon,\delta)$-spanning. Then there exists $x\in X$ such that for all $y\in E$, there exists $t\in[0,T]$ such that
$$d_{\delta}^1(\phi_t(x),\phi_t(y))\geq \varepsilon.$$
Therefore $E\cup\{x\}$ is a $(\phi,T,\varepsilon,\delta)$ separated set, which contradicts  the maximality condition of $E$.  This implies
$$\bar{r}(\phi,T,\varepsilon,\delta)\leq |E|\leq \bar{s}(\phi,T,\varepsilon,\delta),$$
hence
$$\frac{1}{T}\log \bar{r}(\phi,T,\varepsilon,\delta)\leq \frac{1}{T}\log \bar{s}(\phi,T,\varepsilon,\delta).$$
Taking limits, we obtain the inequality in Lemma.
\end{proof}
\begin{lema}\label{Lemacoro}
Let $\phi:\mathbb{R}_0^+\times X\to X$ be a semiflow on the compact metric space $(X,d)$. Then
\begin{eqnarray*}
\bar{h}_{\text{r}}(\phi)\leq \hat{h}_{\text{r}}(\phi) \,\, \text{ and }  \,\,
 \bar{h}_{\text{s}}(\phi) \leq \hat{h}_{\text{s}}(\phi).\end{eqnarray*}
\end{lema}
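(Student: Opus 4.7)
The plan is to prove both inequalities by the single observation that the pseudosemimetric $d_\delta^1$ is pointwise bounded above by $d_\delta$, and then to push this comparison through the definitions of the spanning/separated numbers and finally through the triple limit in $T$, $\varepsilon$, $\delta$.

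\medskip

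The key inequality is
$$d_\delta^1(x,y) \;\le\; d_\delta(x,y) \qquad \text{for all } x,y\in X \text{ and } \delta>0.$$
This is immediate from the definitions: the infimum defining $d_\delta^1$ is over the two-parameter set $\{(s_1,s_2):s_1,s_2\in[0,\delta)\}$, which contains the diagonal $\{(s,s):s\in[0,\delta)\}$ over which $d_\delta$ is computed. So restricting to $s_1=s_2=s$ already gives $d_\delta^1(x,y)\le d(\phi_s(x),\phi_s(y))$ for every $s\in[0,\delta)$; taking the infimum yields the claim.

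\medskip

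Using this, I would argue as in Lemma \ref{A1}. First, if $F\subseteq X$ is $(\phi,T,\varepsilon,\delta)$-spanning for $d_\delta$, then for every $x\in X$ there is $y\in F$ with $d_\delta(\phi_t(x),\phi_t(y))<\varepsilon$ for every $t\in[0,T]$, so by the key inequality
$$d_\delta^1(\phi_t(x),\phi_t(y)) \;\le\; d_\delta(\phi_t(x),\phi_t(y)) \;<\;\varepsilon,$$
and hence $F$ is also $(\phi,T,\varepsilon,\delta)$-spanning for $d_\delta^1$. Taking infima over cardinalities gives
$$\bar r(\phi,T,\varepsilon,\delta)\;\le\;\hat r(\phi,T,\varepsilon,\delta).$$
Second, if $E\subseteq X$ is $(\phi,T,\varepsilon,\delta)$-separated for $d_\delta^1$, then for any two distinct $x,y\in E$ there is $t\in[0,T]$ with $d_\delta^1(\phi_t(x),\phi_t(y))\ge \varepsilon$, and the key inequality promotes this to $d_\delta(\phi_t(x),\phi_t(y))\ge \varepsilon$; therefore $E$ is $(\phi,T,\varepsilon,\delta)$-separated for $d_\delta$. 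Taking suprema over cardinalities gives
$$\bar s(\phi,T,\varepsilon,\delta)\;\le\;\hat s(\phi,T,\varepsilon,\delta).$$

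\medskip

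To conclude, I would take $\tfrac1T\log(\cdot)$ on both sides of each of these inequalities and pass to $\limsup_{T\to\infty}$, then to the successive limits $\varepsilon\to 0^+$ and $\delta\to 0^+$, which are monotone and hence commute painlessly with the nonstrict inequality. This yields $\bar h_{\mathrm r}(\phi)\le\hat h_{\mathrm r}(\phi)$ and $\bar h_{\mathrm s}(\phi)\le\hat h_{\mathrm s}(\phi)$. There is no real obstacle: the whole argument is structural, relying only on the one-line comparison $d_\delta^1\le d_\delta$, and does not use any continuity or regularity of $\phi$. The mildly delicate point to be careful with is the direction of the inequality between spanning and separated numbers (spanning sets transport from $d_\delta$ to $d_\delta^1$, while separated sets transport in the opposite direction from $d_\delta^1$ to $d_\delta$); both transports, however, produce the same final inequality because $\bar r,\hat r$ are infima and $\bar s,\hat s$ are suprema.
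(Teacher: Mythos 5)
Your proof is correct and is essentially the paper's own argument: the paper's proof of this lemma consists precisely of the observation $d_{\delta}^1(x,y)\le d_{\delta}(x,y)$, and you have simply (and accurately) filled in the routine transport of spanning and separated sets and the passage to the limits.
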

\begin{proof} It is direct consequence of
$d_{\delta}^1(x,y)\leq d_{\delta}(x,y)$ for all $x,y\in X$.

\end{proof}

\begin{proof}[ Proof of Theorem \ref{teo1}]
By Lemma \ref{lemacoro2} and Lemma \ref{Lemacoro}, we get the result.
\end{proof}

\section{Proof Theorem  \ref{teo2}}
The main result in \cite[Theorem 3]{JSM} proved that if $\phi$ is a regular impulsive semiflow, then there exist a compact metric space $Y$, a continuous semiflow $\tilde{\phi}:\mathbb{R}_0^+\times Y\to Y$ and a uniform continuous bijection $H:X_{\xi}\to Y$ such that for all $t\geq0$
$$\tilde{\phi}_t\circ H=H\circ\phi_t.$$
Moreover,
$$h_{top}(\tilde{\phi})=\hat{h}_{\text{s}}(\phi).$$

For the proof of Theorem \ref{teo2}, we need some technical lemmas. 
\begin{lema}\label{semiconj}
Let us consider $\phi$ and $\tilde{\phi}$  two semiflows on the metric spaces $(X,d)$ and $(\tilde{X},\tilde{d})$, respectively. If $H:X\to \tilde{X}$ is a uniformly continuous bijection such that for all $t\geq0$
\begin{eqnarray}\label{semiconjugation}
\tilde{\phi}_t\circ H= H\circ \phi_t,
\end{eqnarray}
then
$$ \bar{h}_{\text{r}}(\tilde{\phi})\leq \bar{h}_{\text{r}}(\phi).$$

\end{lema}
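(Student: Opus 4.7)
The plan is to push a spanning set for $\phi$ through $H$ and show that the image is spanning for $\tilde\phi$, with only a controlled loss in the $\varepsilon$-parameter (but no loss in the $\delta$-parameter). Write $\tilde d$ for the metric on $\tilde X$ and $\tilde d_\delta^1$ for the analogue of the pseudosemimetric on $\tilde X$ built from $\tilde\phi$.

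First, I would fix $\varepsilon>0$ and use the uniform continuity of $H$ to choose $\rho=\rho(\varepsilon)>0$ such that $d(x,y)<\rho$ implies $\tilde d(H(x),H(y))<\varepsilon$, arranging that $\rho(\varepsilon)\to 0^+$ as $\varepsilon\to 0^+$. Fix also $\delta>0$ and $T>0$, and let $F\subseteq X$ be a $(\phi,T,\rho,\delta)$-spanning set for $d_\delta^1$, so that $|F|=\bar r(\phi,T,\rho,\delta)$ (or close to it). Set $\tilde F:=H(F)\subseteq\tilde X$, which has the same cardinality as $F$ because $H$ is a bijection.

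Next, I would verify that $\tilde F$ is $(\tilde\phi,T,\varepsilon,\delta)$-spanning for $\tilde d_\delta^1$. Let $\tilde x\in\tilde X$; since $H$ is a bijection, $\tilde x=H(x)$ for a unique $x\in X$. Pick $y\in F$ such that $d_\delta^1(\phi_t(x),\phi_t(y))<\rho$ for all $t\in[0,T]$, and set $\tilde y:=H(y)\in\tilde F$. For each $t\in[0,T]$ choose $s_1,s_2\in[0,\delta)$ with $d(\phi_{s_1}(\phi_t(x)),\phi_{s_2}(\phi_t(y)))<\rho$. Uniform continuity of $H$ gives
\begin{equation*}
\tilde d\bigl(H(\phi_{s_1}(\phi_t(x))),H(\phi_{s_2}(\phi_t(y)))\bigr)<\varepsilon,
\end{equation*}
and the semiconjugacy $H\circ\phi_u=\tilde\phi_u\circ H$ converts the left-hand side into $\tilde d(\tilde\phi_{s_1}(\tilde\phi_t(\tilde x)),\tilde\phi_{s_2}(\tilde\phi_t(\tilde y)))$. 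Taking the infimum over $s_1,s_2\in[0,\delta)$ yields $\tilde d_\delta^1(\tilde\phi_t(\tilde x),\tilde\phi_t(\tilde y))<\varepsilon$ for every $t\in[0,T]$, as required.

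Finally, I would extract the entropy inequality. The above shows $\bar r(\tilde\phi,T,\varepsilon,\delta)\leq\bar r(\phi,T,\rho,\delta)$. Taking $\limsup_{T\to\infty}\frac1T\log$ gives $\bar h_{\text r}(\tilde\phi,\varepsilon,\delta)\leq\bar h_{\text r}(\phi,\rho,\delta)$; letting $\varepsilon\to 0^+$ (so $\rho(\varepsilon)\to 0^+$) yields $\bar h_{\text r}(\tilde\phi,\delta)\leq\bar h_{\text r}(\phi,\delta)$; and letting $\delta\to 0^+$ gives the desired $\bar h_{\text r}(\tilde\phi)\leq\bar h_{\text r}(\phi)$. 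There is no genuinely hard step here: the only subtle point is recognizing that the semiconjugacy lets the image times $s_1,s_2$ on $\tilde X$ be taken to equal the preimage times on $X$, so the $\delta$-parameter transfers cleanly without loss—this is exactly why we need $\tilde\phi_t\circ H=H\circ\phi_t$ as identities (rather than, say, a topological semiconjugacy with rescaled time).
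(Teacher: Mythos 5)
Your proposal is correct and follows essentially the same route as the paper's proof: push a $(\phi,T,\rho,\delta)$-spanning set through $H$, use uniform continuity to convert the $\rho$-closeness into $\varepsilon$-closeness, and use the semiconjugacy identity to identify $H(\phi_{s}(\phi_t(\cdot)))$ with $\tilde\phi_{s}(\tilde\phi_t(H(\cdot)))$ so that the $\delta$-parameter transfers without loss. Your handling of the infimum in $d^1_\delta$ (choosing witnesses $s_1,s_2$ and only then applying $H$) is in fact slightly more careful than the paper's own write-up.
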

\begin{proof}
Since $H$ is uniformly continuous, for all $\varepsilon>0$ there exists $\beta(\varepsilon)=\beta>0$ such that for all $x,y\in X$, we have
$$d(x,y)<\beta \Rightarrow \tilde{d}(H(x),H(y))<\varepsilon.$$
Let us consider $\delta>0$, $T>0$ and  $F\subset X$ be a $(\phi,T,\beta,\delta)$-spanning set for $d_{\delta}^{1,\phi}$. We claim that $H(F)$ is  $(\tilde{\phi},T,\varepsilon,\delta)$-spanning for $d_{\delta}^{1,\tilde{\phi}}$. Indeed,  for all $\tilde{x}\in \tilde{X}$ there exists $y\in F$ such that for all $t\in[0,T]$ we have 
$d_{\delta}^{1,\phi}(\phi_t(H^{-1}(\tilde{x})),\phi_t(y))<\beta $. This implies $$\tilde{d}_{\delta}^{1,\phi}(H(\phi_t(H^{-1}(\tilde{x}))),H(\phi_t(y)))<\varepsilon,$$
and so, by \eqref{semiconjugation}, we deduce
$$\tilde{d}_{\delta}^{1,\tilde{\phi}}(\tilde{\phi}_t (\tilde{x}),\tilde{\phi}_t( H(y)))<\varepsilon.$$
This proves that $H(F)$ is $(\tilde{\phi},T,\varepsilon,\delta)$-spanning for $d_{\delta}^{1,\tilde{\phi}}$. Since
$|H(F)|=|F| $
we have 
$$ \bar{r}(\tilde{\phi},T,\varepsilon,\delta) \leq \bar{r}(\phi,T,\beta,\delta).$$
Taking logarithms and limits (noting that that $\beta\to0^+$ when $\varepsilon\to0^+$) we deduce the inequality in the Lemma.
\end{proof}
\begin{coro}\label{r1}
Let us consider  $\phi:\mathbb{R}^{+}_0\times X\to X$  the semiflow  associated to the regular impulsive system  $(X,\varphi,D,I)$ and $\xi$ in definition \ref{RIDS}. Then, there exist a compact metric space $Y$, a continuous semiflow $\tilde{\phi}:\mathbb{R}_0^+\times Y\to Y$ and a uniform continuous bijection $H:X_{\xi}\to Y$ such that 
\begin{equation}\label{sconj}
\tilde{\phi}_t\circ H=H\circ\phi_t, \mbox{ for all }t\geq 0
\end{equation}
Moreover,
 $h_{top}(\tilde{\phi})\leq \bar{h}_r(\phi|_{X_{\xi}}).$
\end{coro}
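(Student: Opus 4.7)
The plan is to derive the corollary by combining the construction of \cite{JSM} (which supplies the compact space $Y$, the continuous semiflow $\tilde\phi$, and the uniformly continuous bijection $H$), with Theorem \ref{teo0} (to identify $h_{top}(\tilde\phi)$ with $\bar h_{\text{r}}(\tilde\phi)$), and with an adaptation of Lemma \ref{semiconj} (to push the inequality back to $\phi|_{X_\xi}$). The existence of the triple $(Y,\tilde\phi,H)$ satisfying the semiconjugation \eqref{sconj} is precisely the content of \cite[Theorem 3]{JSM}; I will quote it rather than reprove it. What remains is the inequality $h_{top}(\tilde\phi)\leq \bar h_{\text{r}}(\phi|_{X_\xi})$.

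Since $\tilde\phi$ is a continuous semiflow on the compact metric space $Y$, Theorem \ref{teo0} gives $h_{top}(\tilde\phi)=\bar h_{\text{r}}(\tilde\phi)$, so it suffices to show $\bar h_{\text{r}}(\tilde\phi)\leq \bar h_{\text{r}}(\phi|_{X_\xi})$. I will do this by transporting spanning sets from $X_\xi$ to $Y$ via $H$, mimicking the proof of Lemma \ref{semiconj}. Fix $\varepsilon>0$; uniform continuity of $H$ yields $\beta=\beta(\varepsilon)>0$ such that $d(u,v)<\beta$ implies $\tilde d(H(u),H(v))<\varepsilon$ for $u,v\in X_\xi$. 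Given $\delta,T>0$ and a $(\phi|_{X_\xi},T,\beta,\delta)$-spanning subset $F\subset X_\xi$ for $d_\delta^{1,\phi}$, I claim $H(F)\subset Y$ is $(\tilde\phi,T,\varepsilon,\delta)$-spanning for $\tilde d_\delta^{1,\tilde\phi}$: for any $\tilde x=H(x)\in Y$ with $x\in X_\xi$, choose $y\in F$ with $d_\delta^{1,\phi}(\phi_t(x),\phi_t(y))<\beta$ for all $t\in[0,T]$; applying \eqref{sconj} to commute $H$ past $\phi_{s_1},\phi_{s_2}$ and $\phi_t$, together with uniform continuity of $H$, converts this into $\tilde d_\delta^{1,\tilde\phi}(\tilde\phi_t(\tilde x),\tilde\phi_t(H(y)))<\varepsilon$. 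Since $|H(F)|=|F|$, we get $\bar r(\tilde\phi,T,\varepsilon,\delta)\leq \bar r(\phi|_{X_\xi},T,\beta,\delta)$, and taking $\tfrac{1}{T}\log$, $\limsup_{T\to\infty}$, $\varepsilon\to 0^+$ (hence $\beta\to 0^+$), and finally $\delta\to 0^+$, produces the desired inequality.

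The main subtlety I expect is the transfer step: the infimum defining $d_\delta^{1,\phi}(\phi_t(x),\phi_t(y))$ is realized (or approximated) by pairs $\phi_{s_1}(\phi_t(x)),\phi_{s_2}(\phi_t(y))$ that may temporarily leave $X_\xi$, so one has to be careful that the uniform continuity estimate for $H$ can still be applied on the $Y$-side. This is handled, exactly as in Lemma \ref{semiconj}, by first using \eqref{sconj} to rewrite $H(\phi_{s_j}(\cdot))$ as $\tilde\phi_{s_j}(H(\cdot))$ so that the $\tilde\phi$-iterates live in $Y$ and the infimum over $s_1,s_2\in[0,\delta)$ survives unchanged; the estimate is thus reduced to comparing distances between pairs of points to which $H$ is directly applicable via its uniform continuity.
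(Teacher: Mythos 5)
Your proposal is correct and follows essentially the same route as the paper: the paper likewise quotes \cite[Theorem 3]{JSM} for the triple $(Y,\tilde\phi,H)$, uses Theorem \ref{teo0} to write $h_{top}(\tilde\phi)=\bar h_{\text{r}}(\tilde\phi)$, and then obtains $\bar h_{\text{r}}(\tilde\phi)\leq\bar h_{\text{r}}(\phi|_{X_\xi})$ by applying Lemma \ref{semiconj} to $H:X_\xi\to Y$. The spanning-set transport you carry out by hand is exactly the content of that lemma, so the two arguments coincide.
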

\begin{proof}
Let consider $X_{\xi}$ as in the Definition \ref{RIDS} and put $Y=\pi(X_{\xi})$. By properties of  regular impulsive systems we know that $Y$ is a compact metric space (see \cite[Proposition 5.4]{JSM}). Choose
$H= \pi|_{X_{\xi}} $ and $\tilde{\phi}$ the induced semiflow. These cleary satisfy \eqref{sconj}.
Now, by Theorem \ref{teo0} and Lemma \ref{semiconj}, we have
$$ \bar{h}_r(\tilde{\phi})= h_{top}(\tilde{\phi}) \leq \bar{h}_r(\phi|_{X_{\xi}}).$$
\end{proof}
\begin{mainpropo}
\label{r2}
 Let us consider $\phi:\mathbb{R}^{+}_0\times X\to X$  the semiflow  associated to the regular impulsive system  $(X,\varphi,D,I)$. Then
$$\bar{h}_r(\phi|_{X_{\xi}})\leq\bar{h}_r(\phi)$$

\end{mainpropo}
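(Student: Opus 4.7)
The plan is to take an arbitrary $(\phi, T, \varepsilon, \delta)$-spanning set $F \subseteq X$ for $d_\delta^{1,\phi}$ and build from it a $(\phi|_{X_\xi}, T-\xi, \varepsilon, \delta+\xi)$-spanning set $F' \subseteq X_\xi$ with $|F'| \leq |F|$. Once this inequality of spanning numbers is established, dividing by $T$, taking $\limsup_{T\to\infty}$ (with the shift $T\mapsto T-\xi$ absorbed into the $1/T$ factor), then $\varepsilon \to 0^+$, and finally $\delta \to 0^+$ will deliver the claimed entropy inequality.

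To construct $F'$, set $y' := y$ and $s(y) := 0$ whenever $y \in F \cap X_\xi$; for $y \in F \setminus X_\xi$, invoke conditions (2) and (3) of Definition \ref{RIDS} to select a time $s(y) \in (0, \xi]$ with $y' := \phi_{s(y)}(y) \in X_\xi$, using that the impulsive drift of any point in $D \cup D_\xi$ enters $X_\xi$ within time at most $\xi$. Let $F' := \{y' : y \in F\}$, so $|F'| \leq |F|$. To verify the spanning property, take $x \in X_\xi$ and pick $y \in F$ spanning $x$. For each $t \in [0, T - \xi]$, the instant $t + s(y)$ lies in $[0, T]$, so the spanning at this time yields $s_1^{*}, s_2^{*} \in [0, \delta)$ with
$$d\bigl(\phi_{t+s(y)+s_1^{*}}(x),\; \phi_{t+s(y)+s_2^{*}}(y)\bigr) < \varepsilon.$$
By the semiflow identity $\phi_t(y') = \phi_{t+s(y)}(y)$, the parameters $s_1 := s(y) + s_1^{*} \in [0, \delta+\xi)$ and $s_2 := s_2^{*} \in [0, \delta+\xi)$ are admissible choices in the infimum defining $d_{\delta+\xi}^{1}(\phi_t(x), \phi_t(y'))$, forcing this pseudosemimetric to be strictly less than $\varepsilon$.

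I expect the main obstacle to be not the telescoping above, which is essentially a routine parameter count, but rather the verification of the regularity step that produces $s(y) \in (0, \xi]$ with $\phi_{s(y)}(y) \in X_\xi$ for every $y \in D \cup D_\xi$; for $y \in D_\xi$ this reduces to condition (2) of Definition \ref{RIDS}, while for $y \in D$ it requires tracking the first impulse time and using condition (3) together with $I(D) \cap D = \emptyset$ to ensure that the orbit has indeed left $D \cup D_\xi$ within time $\xi$. A secondary delicate point is that the final limit on the left-hand side formally produces $\lim_{\delta \to 0^+}\bar h_r(\phi|_{X_\xi}, \delta + \xi)$; this is identified with $\bar h_r(\phi|_{X_\xi})$ via Corollary \ref{r1}, which together with Lemma \ref{semiconj} and Theorem \ref{teo0} forces $\bar h_r(\phi|_{X_\xi}, \delta)$ to stabilize at $h_{top}(\tilde\phi)$ for all sufficiently small $\delta$, so that the $\xi$-shift in the parameter is immaterial in the limit.
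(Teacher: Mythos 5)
Your construction of $F'$ (push each $y\in F$ forward into $X_{\xi}$ and absorb the resulting time shift into the first parameter of a wider pseudosemimetric window) is exactly the paper's strategy, and the telescoping computation itself is sound. The gap is quantitative: you allow the shift $s(y)$ to be as large as $\xi$, a constant independent of $\delta$, so your comparison reads $\bar{r}(\phi|_{X_{\xi}},T-\xi,\varepsilon,\delta+\xi)\leq\bar{r}(\phi,T,\varepsilon,\delta)$ and, after the limits, only yields $\lim_{\delta\to0^+}\bar{h}_{\text{r}}(\phi|_{X_{\xi}},\delta+\xi)\leq\bar{h}_{\text{r}}(\phi)$. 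Since $\delta\mapsto\bar{h}_{\text{r}}(\cdot,\delta)$ is \emph{nonincreasing} and the entropy is defined as the limit (hence the supremum) as $\delta\to0^+$, the left-hand side is bounded above by $\bar{h}_{\text{r}}(\phi|_{X_{\xi}},\xi)$, which is in general \emph{smaller} than $\bar{h}_{\text{r}}(\phi|_{X_{\xi}})$; the inequality you obtain is therefore weaker than the statement. Your proposed repair --- that $\bar{h}_{\text{r}}(\phi|_{X_{\xi}},\delta)$ ``stabilizes'' at $h_{top}(\tilde\phi)$ for small $\delta$ --- is not delivered by Corollary \ref{r1}, Lemma \ref{semiconj} and Theorem \ref{teo0}: all of these control only the $\delta\to0^+$ limit. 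In particular, in the proof of Lemma \ref{A2} the modulus $\beta(\alpha)$ shrinks as $\alpha\to0$, so one cannot conclude $\bar{h}_{\text{r}}(\tilde\phi,\delta)=h_{top}(\tilde\phi)$ at the \emph{fixed} positive window $\delta\geq\xi$ that your argument requires; and in any case you would need the stabilization for $\phi|_{X_{\xi}}$, not just for the continuous factor $\tilde\phi$.

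The paper's proof is devoted precisely to avoiding this: using the neighborhoods $V_1,V_2,V_3$ of $D$, $\varphi_{\xi}(D)$ and $I(D)$ from Lemmas \ref{af2}, \ref{af3} and \ref{V1V2}, it shows that for the particular $y\in F$ that spans a given $x\in X_{\xi}$, the case $y\in D$ is impossible, and for $y\in D_{\xi}$ the entry time $t(y)=\inf\{t\geq0:\phi_t(y)\in X_{\xi}\}$ satisfies $t(y)<2\delta$ (either because $s_2\geq t(y)$ with $s_2<\delta$, or because $\phi_{s_2}(y)$ must lie in $V_2^{in}$ and exits $D_{\xi}$ within time $\theta_2<\delta$). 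This bounds the shift by $2\delta$ rather than $\xi$, so the enlarged window is $3\delta$, which still tends to $0$ and makes the final limit come out as $\bar{h}_{\text{r}}(\phi|_{X_{\xi}})$. A secondary point: your claim that every $y\in D\cup D_{\xi}$ enters $X_{\xi}$ under $\phi$ within time $\xi$ also needs care (for $y\in D$ the impulsive orbit may jump to $I(D)$ before time $\xi$, and one must check $I(D)\subset X_{\xi}$), but that issue is moot in the paper's argument because the case $y\in D$ is excluded outright.
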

To prove  this proposition, we need established some properties about $D_{\xi}$ when $(X,\varphi,D,I)$ is a regular impulsive system.
\begin{lema}\label{af1} \textcolor{purple}{Let consider $(X,\varphi,D,I)$ a regular impulsive system. Then
$$\partial D_{\xi}=D\cup \varphi_{\xi}(D).$$}
\end{lema}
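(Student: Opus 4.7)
The plan is to prove the two inclusions $D\cup\varphi_\xi(D)\subseteq \partial D_\xi$ and $\partial D_\xi\subseteq D\cup\varphi_\xi(D)$ separately. Since $D_\xi$ is open (condition (1) of Definition \ref{RIDS}), $\partial D_\xi=\overline{D_\xi}\setminus D_\xi$, so for the first inclusion I must show that every $x\in D\cup\varphi_\xi(D)$ lies in $\overline{D_\xi}$ and avoids $D_\xi$. Membership in $\overline{D_\xi}$ will be immediate from continuity of $\varphi$: for $x\in D$ the net $\varphi_t(x)\in D_\xi$ converges to $x$ as $t\to 0^+$, and for $x=\varphi_\xi(y)$ with $y\in D$ the net $\varphi_t(y)\in D_\xi$ converges to $x$ as $t\to\xi^-$.

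The real content lies in the two disjointness statements $D\cap D_\xi=\emptyset$ and $\varphi_\xi(D)\cap D_\xi=\emptyset$. The first is easy from condition (2): if $x\in D\cap D_\xi$, then $\varphi_\xi(x)$ would belong to $\varphi_\xi(D)$ and simultaneously, as $x\in D_\xi$, to $\varphi_\xi(D_\xi)\subset X_\xi\setminus\varphi_\xi(D)$, a contradiction. The second, which I expect to be the main obstacle, uses openness of $D_\xi$ in an essential way. Suppose $\varphi_\xi(y)\in D_\xi$ with $y\in D$. Because $D_\xi$ is open and $t\mapsto\varphi_t(y)$ is continuous, there exists $\epsilon>0$ (and I may choose it with $0<\epsilon<\xi$) such that $\varphi_{\xi+\epsilon}(y)\in D_\xi$. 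Writing $\varphi_{\xi+\epsilon}(y)=\varphi_\xi(\varphi_\epsilon(y))$ with $\varphi_\epsilon(y)\in D_\xi$, this contradicts $\varphi_\xi(D_\xi)\subset X_\xi\subset X\setminus D_\xi$.

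For the reverse inclusion, I would take $x\in\partial D_\xi$ and a sequence $\varphi_{t_n}(y_n)\to x$ with $y_n\in D$ and $t_n\in(0,\xi)$. Since $D$ is closed in the compact space $X$ it is compact, and $[0,\xi]$ is compact, so passing to subsequences I may assume $y_n\to y\in D$ and $t_n\to t\in[0,\xi]$; continuity of $\varphi$ then gives $x=\varphi_t(y)$. The case $0<t<\xi$ is ruled out because it would place $x$ in $D_\xi$, against $x\in\partial D_\xi=\overline{D_\xi}\setminus D_\xi$. Hence $t=0$, giving $x=y\in D$, or $t=\xi$, giving $x=\varphi_\xi(y)\in\varphi_\xi(D)$, which finishes the argument.
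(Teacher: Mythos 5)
Your proof is correct, and for the inclusion $\partial D_{\xi}\subseteq D\cup\varphi_{\xi}(D)$ it coincides with the paper's argument word for word: take $\varphi_{t_n}(x_n)\to x$ with $x_n\in D$, $t_n\in(0,\xi)$, pass to subsequences by compactness of $D$ and $[0,\xi]$, and rule out $t\in(0,\xi)$ using openness of $D_{\xi}$. The difference is that the paper stops there: its proof establishes only this one inclusion, even though the lemma asserts an equality. You supply the missing half, $D\cup\varphi_{\xi}(D)\subseteq\partial D_{\xi}$, and you do so carefully — membership in $\overline{D_{\xi}}$ by letting $t\to0^{+}$ or $t\to\xi^{-}$, and the two disjointness claims $D\cap D_{\xi}=\emptyset$ and $\varphi_{\xi}(D)\cap D_{\xi}=\emptyset$ via condition (2) of Definition \ref{RIDS} (your trick of pushing forward by a small extra time $\epsilon$ so as to land in $\varphi_{\xi}(D_{\xi})\subset X_{\xi}$ is exactly what is needed, since neither disjointness is literally part of the definition of $D_{\xi}$). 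So your version is strictly more complete than the paper's; for the way the lemma is actually used later (only the inclusion $\partial D_{\xi}\subseteq D\cup\varphi_{\xi}(D)\subset V_1\cup V_2$ is invoked in Lemma \ref{V1V2}), the paper's half suffices, but as a proof of the stated equality yours is the correct one.
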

\begin{proof}
Given $x\in\partial D_{\xi}$, there are sequences $x_n\in D$ and $t_n\in (0,\xi)$ such that
$$\lim_{n\to\infty}\varphi_{t_n}(x_n)=x.$$ By compactness, suppose that $\lim_{n\to\infty}x_n=y\in D$ and $\lim_{n\to\infty}t_n=t\in[0,\xi]$. If $t\in(0,\xi)$, then $x=\varphi_t(y)\in D_{\xi}$, which is a contradiction, because $D_{\xi}$ is open. Therefore $x=\varphi_0(y)=y\in D$ or $x=\varphi_{\xi}(y)\in \varphi_{\xi}(D)$.
\end{proof}
\begin{lema}\label{V1V2}
Given  neighborhoods $V_1$ and $V_2$ of $D$ and $\varphi_{\xi}(D)$, respectively. There exists $\rho>0$ such that if $x\in X_{\xi}$, $y\in D_{\xi}$ and $d(x,y)<\rho$, then $x,y\in V_1$ or $x,y\in V_2$.
\end{lema}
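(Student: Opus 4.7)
My plan is to proceed by contradiction, using sequential compactness of $X$ together with the explicit description of $\overline{D_\xi}$ furnished by Lemma~\ref{af1}. First I would assume the conclusion fails and extract, for each $n \in \mathbb{N}$, a pair $x_n \in X_\xi$ and $y_n \in D_\xi$ with $d(x_n,y_n) < 1/n$ such that $\{x_n,y_n\} \not\subset V_1$ and $\{x_n,y_n\} \not\subset V_2$. By compactness of $X$, I pass to subsequences converging to a common limit $z \in X$; the two limits must coincide because $d(x_n,y_n)\to 0$.

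The next step is to pin down where $z$ can lie. Since $y_n \in D_\xi$ we have $z \in \overline{D_\xi}$, and Lemma~\ref{af1} gives
$$\overline{D_\xi} = D_\xi \cup D \cup \varphi_\xi(D).$$
The case $z \in D_\xi$ is impossible: $D_\xi$ is open by Definition~\ref{RIDS}(1), so eventually $x_n \in D_\xi$, contradicting $x_n \in X_\xi \subset X \setminus D_\xi$. Hence $z \in D \cup \varphi_\xi(D)$.

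Finally I would split into cases according to which of the two compact sets $z$ belongs to, shrinking $V_1$ and $V_2$ to open neighborhoods if necessary (since a neighborhood by definition contains an open neighborhood of the set). If $z \in D$ then $z \in V_1$, so both $x_n$ and $y_n$ lie in $V_1$ for all sufficiently large $n$, contradicting the choice of the sequences; the case $z \in \varphi_\xi(D)$ is handled identically with $V_2$ in place of $V_1$. This contradiction yields the existence of the desired $\rho$.

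The main subtlety I anticipate is that $X_\xi$ need not itself be closed in $X$, so one cannot directly conclude $z \in X_\xi$ and argue purely from the geometry of $X_\xi$. The argument sidesteps this by working instead with $\overline{D_\xi}$, which is fully controlled by Lemma~\ref{af1}, and by exploiting the openness of $D_\xi$ to rule out the interior case. The disjointness $D \cap \varphi_\xi(D) = \emptyset$ noted after Definition~\ref{RIDS} is not strictly needed for the argument, but it reassures that the two cases are genuinely separate.
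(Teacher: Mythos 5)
Your proof is correct and follows essentially the same route as the paper: assume failure, extract sequences $x_n\in X_\xi$, $y_n\in D_\xi$ converging to a common limit, locate that limit in $D\cup\varphi_\xi(D)\subset V_1\cup V_2$ via Lemma~\ref{af1}, and derive a contradiction. In fact you are slightly more careful than the paper in stating the correct negation of the conclusion and in explicitly ruling out the case $z\in D_\xi$ using openness of $D_\xi$, both of which the paper leaves implicit.
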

\begin{proof}
Suppose that for all $n>0$, there exist $x_n\in X_{\xi}$ and $y_n\in D_{\xi}$ with $d(x,y)<1/n$ such that  $x_n\in V_1$ and  $y_n\in V_2$, or  $x_n\in V_2$ and  $y_n\in V_1$.  We can suppose that both sequences are convergent, say   $x_n \to x$ and   $y_n \to y$, and so $x=y$. This implies that $x \in \partial D_\xi = D\cup \varphi_{\xi}(D)\subset V_1\cup V_2$. Suppose that $x\in V_1$, then   $x_n \in  V_1$  and so  $y_n\in V_1$ for all large  $n$, which is a contradiction. On the other hand, to  suppose   $x\in V_2$ leads another contradiction. 
\end{proof}

\begin{lema}\label{af2}
Let consider $(X,\varphi,D,I)$ a regular impulsive system. For all $\theta\in(0,\xi)$, there exist $\theta_1 <\theta$ and  $\varepsilon_1>0$ such that $V_1=\{x\in X: d(x,D)<\varepsilon_1\}$ is neighborhood of $D$ that satisfy
\begin{enumerate}[a)]
 \item $\varphi_{\theta_1}(V_1)\subset D_{\xi}$
    \item $V_1=V_1^{out}\cup D\cup V_1^{in}$
    \item $V_1^{in}\subset D_{\xi}$
    \item $V_1^{out}\subset \overline{D_{\xi}}^c$
   
\end{enumerate}
\end{lema}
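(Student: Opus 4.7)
The plan is to exploit three facts in sequence: continuity of the underlying continuous semiflow $\varphi$, compactness of $D$ (a closed subset of the compact space $X$), and the description of $\partial D_\xi$ furnished by Lemma \ref{af1}. The crucial ingredient from the regularity of the impulsive system is that $D_\xi$ is open (Definition \ref{RIDS}, (1)) and that $D \cap \varphi_\xi(D) = \emptyset$ (stated immediately after Definition \ref{RIDS}).

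First, I would fix any $\theta_1 \in (0,\theta)$; since $\theta < \xi$, this also places $\theta_1 \in (0,\xi)$, so by the very definition of $D_\xi$ we have $\varphi_{\theta_1}(D) \subset D_\xi$. Because $\varphi_{\theta_1}$ is continuous, $\varphi_{\theta_1}(D)$ is a compact subset of the open set $D_\xi$; a standard tube/cover argument then produces an open neighborhood $U$ of $D$ with $\varphi_{\theta_1}(U) \subset D_\xi$. Compactness of $D$ gives a number $\varepsilon_0>0$ such that $\{x \in X : d(x,D)<\varepsilon_0\} \subset U$; this is how condition (a) will be secured.

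Next I would shrink $\varepsilon_0$ if necessary so that $V_1$ avoids $\varphi_\xi(D)$. Since $D$ and $\varphi_\xi(D)$ are disjoint compact sets, the distance $\rho_0 := d(D,\varphi_\xi(D))$ is strictly positive, so the choice $\varepsilon_1 := \min\{\varepsilon_0, \rho_0/2\}$ ensures that $V_1 := \{x\in X : d(x,D)<\varepsilon_1\}$ satisfies both (a) and $V_1 \cap \varphi_\xi(D)=\emptyset$. Invoking Lemma \ref{af1}, $V_1 \cap \partial D_\xi = V_1 \cap (D \cup \varphi_\xi(D)) = D$. Writing $X$ as the disjoint union $D_\xi \sqcup \partial D_\xi \sqcup (\overline{D_\xi})^c$ and intersecting with $V_1$, I would define
$$V_1^{in} := V_1 \cap D_\xi, \qquad V_1^{out} := V_1 \cap \overline{D_\xi}^{\,c},$$
and obtain the decomposition $V_1 = V_1^{out} \sqcup D \sqcup V_1^{in}$, establishing (b), while (c) and (d) are immediate from these definitions.

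The proof is largely a bookkeeping exercise once the right objects are in place; I do not anticipate a serious obstacle. The only point requiring care is the order in which the parameters are chosen: $\theta_1$ must be fixed first (inside $(0,\theta)\subset(0,\xi)$) so that $\varphi_{\theta_1}(D) \subset D_\xi$, then the open-neighborhood step provides a first bound on $\varepsilon_1$, and finally the disjointness $D \cap \varphi_\xi(D) = \emptyset$ provides the second bound that forces $V_1$ to miss the $\varphi_\xi(D)$ half of $\partial D_\xi$. Implicit throughout is the continuity of $\varphi$, which is guaranteed because the impulsive semiflow $\phi$ is built from a continuous semiflow $\varphi$ in Definition \ref{RIDS}.
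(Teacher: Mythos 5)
Your proof is correct and follows essentially the same route as the paper's: fix $\theta_1<\theta$, use continuity of $\varphi_{\theta_1}$ together with compactness of $D$ and openness of $D_\xi$ to secure (a), then set $V_1^{in}=V_1\cap D_\xi$ and $V_1^{out}=V_1\cap\overline{D_\xi}^{\,c}$. Your additional step --- shrinking $\varepsilon_1$ so that $V_1\cap\varphi_\xi(D)=\emptyset$ and invoking Lemma \ref{af1} to conclude $V_1\cap\partial D_\xi=D$ --- is in fact needed for the set equality in (b) and is glossed over in the paper's own proof, where the analogous shrinking is only performed later, after Lemma \ref{af3}.
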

\begin{proof}
Let consider $\theta\in(0,\xi)$ and $x\in D$, then for all $\theta_1< \theta$, $\varphi_{\theta_1}(x)\in D_{\xi}$. Since $D_{\xi}$ is an open set and $D$ is compact, there exist  $\varepsilon_1>0$ such that $V_1=\{x\in X: d(x,D)<\varepsilon_1\}$ is a neighborhood of $D$ satisfying $\varphi_{\theta_1}(V_1)\subset D_{\xi}$. This prove a). For $V_1^{in}=V_1\cap D_{\xi}$ and $V_1^{out}=V_1\cap \overline{D_{\xi}}^c$, we have b), c) and d). 
\end{proof}
\begin{lema}\label{af3}
Let consider $(X,\varphi,D,I)$ a regular impulsive system. For all $\theta $, there exists $\theta_2 < \theta$ and $\varepsilon_2>0$ such that $V_2=\{x\in X: d(x,\varphi_{\xi}(D))<\varepsilon_2\}$ is neighborhood of $\varphi_{\xi}(D)$ that satisfy
\begin{enumerate}[a)]
     \item $\varphi_{t}(\varphi_{\xi}(D))\subset X_{\xi}\setminus \varphi_{\xi}(D)$ for all $t \in (0, \theta_2)$
     \item $\varphi_{\theta_2}(V_2)\subset X_{\xi}\setminus \varphi_{\xi}(D)$
    \item $V_2=V_2^{out}\cup \varphi_{\xi}(D)\cup V_2^{in}$
    \item $V_2^{in}\subset D_{\xi}$
    \item $V_2^{out}\subset \overline{D_{\xi}}^c$
   
\end{enumerate}
\end{lema}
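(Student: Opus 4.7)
The plan is to parallel Lemma \ref{af2}, swapping the role of $D$ for that of $\varphi_{\xi}(D)$. The key structural inputs are that $D$ and $\varphi_{\xi}(D)$ are disjoint compact sets (the observation following Definition \ref{RIDS}), hence $d(D,\varphi_{\xi}(D))>0$, and that by Lemma \ref{af1} one has the partition
\begin{equation*}
X=D\;\sqcup\;\varphi_{\xi}(D)\;\sqcup\; D_{\xi}\;\sqcup\;(X\setminus\overline{D_{\xi}}),
\end{equation*}
so in particular $X_{\xi}\setminus\varphi_{\xi}(D)=X\setminus\overline{D_{\xi}}$ is open.

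For (a) and the pointwise form of (b), fix $\theta_2<\min\{\theta,\xi\}$. Given $x=\varphi_{\xi}(y)\in\varphi_{\xi}(D)$ with $y\in D$, and any $t\in(0,\theta_2]$, one has $\varphi_t(x)=\varphi_{\xi}(\varphi_t(y))$; since $t\in(0,\xi)$, the point $\varphi_t(y)$ lies in $D_{\xi}$, so condition (2) of Definition \ref{RIDS} yields $\varphi_t(x)\in X_{\xi}\setminus\varphi_{\xi}(D)$. This proves (a), and taking $t=\theta_2$ gives $\varphi_{\theta_2}(\varphi_{\xi}(D))\subset X_{\xi}\setminus\varphi_{\xi}(D)$. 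Since the target set is open, continuity of $\varphi_{\theta_2}$ together with compactness of $\varphi_{\xi}(D)$ yields, by a standard tubular-neighborhood argument, some $\varepsilon_2>0$ such that $V_2:=\{x\in X:d(x,\varphi_{\xi}(D))<\varepsilon_2\}$ still satisfies $\varphi_{\theta_2}(V_2)\subset X_{\xi}\setminus\varphi_{\xi}(D)$, giving (b).

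For (c)--(e), further shrink $\varepsilon_2$ so that $\varepsilon_2<d(D,\varphi_{\xi}(D))$, which forces $V_2\cap D=\emptyset$. Defining $V_2^{in}:=V_2\cap D_{\xi}$ and $V_2^{out}:=V_2\cap\overline{D_{\xi}}^{c}$, the partition of $X$ recalled above restricts to the decomposition $V_2=V_2^{out}\cup\varphi_{\xi}(D)\cup V_2^{in}$, so (c), (d), (e) all follow. The main subtlety, and the only place disjointness is essential, is the last shrinking step: without it a point of $D$ could intrude into $V_2$, breaking the clean three-way decomposition. Everything else is routine continuity and compactness, but this quantitative separation of the compact sets $D$ and $\varphi_{\xi}(D)$ is the real engine of the argument.
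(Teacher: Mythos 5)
Your proof is correct and follows essentially the same route as the paper: the semigroup identity $\varphi_t(\varphi_\xi(y))=\varphi_\xi(\varphi_t(y))$ combined with condition (2) of Definition \ref{RIDS} applied to $\varphi_t(y)\in D_\xi$ gives (a), then compactness of $\varphi_\xi(D)$ and openness of $X_\xi\setminus\varphi_\xi(D)$ give (b), and the sets $V_2^{in}=V_2\cap D_\xi$, $V_2^{out}=V_2\cap\overline{D_\xi}^{\,c}$ give (c)--(e). Your explicit shrinking of $\varepsilon_2$ below $d(D,\varphi_\xi(D))$ to keep $D$ out of $V_2$ is a point the paper only addresses implicitly (in the remark after the lemma, where $V_1$ and $V_2$ are made disjoint), and it is indeed needed for the three-way decomposition in (c) to hold literally.
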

\begin{proof}
Observe that since $\varphi_{\xi}(D_{\xi})\subset X_{\xi}\setminus \varphi_{\xi}(D)$, then for all $z\in \varphi_{\xi}(D)$ and for all $t\in(0,\xi)$, $\varphi_t(z)\in X_{\xi}\setminus \varphi_{\xi}(D)$. Indeed, suppose that there exists $z\in \varphi_{\xi}(D)$ and $t \in (0,\xi)$ such that $\varphi_{t}(z)\notin X_{\xi} \setminus\varphi_{\xi}(D)$. Take $x\in D$ such that $\varphi_{\xi}(x)=z$ therefore $\varphi_{\xi}(\varphi_{t}(x))=\varphi_{t}(z)\notin X_{\xi} \setminus\varphi_{\xi}(D)$. On the other hand, since $\varphi_{t}(x)\in D_{\xi}$ we have that $\varphi_{\xi}(\varphi_{t}(x))\in X_{\xi} \setminus\varphi_{\xi}(D)$, which is a contradiction. By the compactness of $\varphi_{\xi}(D)$, there exists a postive $\theta_2<\xi$ such that $\varphi_{t}(\varphi_{\xi}(D))\subset X_{\xi}\setminus \varphi_{\xi}(D)$ for all $t \in (0,\theta_2)$, which prove a). Moreover, by the continuity of $\varphi$ and since $ X_{\xi}\setminus \varphi_{\xi}(D)$ is an open set, there exists $\varepsilon_2>0$ such that $V_2=\{x\in X: d(x,\varphi_{\xi}(D))<\varepsilon_2\}$ is neighborhood of $\varphi_{\xi}(D)$ satisfying $\varphi_{\theta_2}(V_2)\subset X_{\xi}\setminus \varphi_{\xi}(D)$, this prove b) . For $V_2^{in}=V_2\cap D_{\xi}$ and $V_2^{out}=V_2\cap \overline{D_{\xi}}^c$, we have c), d) and e).
\end{proof}
 Let consider the neighborhoods $ V_1$ and $V_2$ given in Lemmas \ref{af2} and \ref{af3}  respectively. Shrinking $\epsilon_1$ and $\epsilon_2$ we can suppose $ V_1$ and $V_2$ are disjoint because $D \cap \varphi_\xi(D)=\emptyset$. Moreover, since $D\cap I(D)=\emptyset$, we can chose $\varepsilon_1>0$ and $\varepsilon_3>0$ such that  $d(V_1,V_3)\geq \alpha$ for some $\alpha>0$  where $V_3$   is neighborhood of $I(D)$ defined by $  V_3=\{x\in X: d(x,I(D))<\varepsilon_3\}$..
\begin{proof}[Proof of Proposition \ref{r2}]

Fix a small positive  $\delta$. Since  $\phi$ is a regular impulsive semiflow, for  $\theta_1\in(0,\xi)$, let consider  $\varepsilon_1$ and  $V_1$ given by the Lemma \ref{af2}. Moreover, let consider   $\theta_2>0$, $\varepsilon_2>0$ and  $V_2=\{x\in X: d(x,\varphi_{\xi}(D))<\varepsilon_2\}$ given by  the Lemma \ref{af3}. We can suppose that $\max\{\theta_1,\theta_2\} < \delta$. Let consider $\alpha>0$  and  $V_3$ as above. For any subset $F$ in $X$, denote by $F'$ the set given by   $F'=\{y_{t(y)}=\phi_{t(y)}(y):y\in F\}$, where for each $y\in F$
$$t(y)=\inf\{t\geq0: \phi_t(y)\in X_{\xi}\}.$$ 

Take $\varepsilon>0$, with  $\varepsilon<\min\{\alpha,\varepsilon_1,\varepsilon_2, \varepsilon_3\}$,  $T>3\delta$ and $F\subset X$ a $(\phi,T,\varepsilon,\delta)$-spanning set for $d_{\delta}^1$. 
We claim that $F'$ is a $(\phi|_{X_{\xi}},T,\varepsilon,3\delta)$-spanning set for $d_{3\delta}^1$, that is, for all $x\in X_{\xi}$, there exist $y'\in F'$  such that for all $t\in[0,T-2\delta]$ 
\begin{eqnarray}\label{0}
d_{3\delta}^1(\phi_t(x),\phi_t(y'))<\varepsilon.
\end{eqnarray}
Indeed, for all $x\in X_{\xi}$, there exists $y\in F$ such that  $ \forall t\in[0,T]$
$$d_{\delta}^1(\phi_t(x),\phi_t(y))<\varepsilon.$$
Particularly, $d_{\delta}^1(x,y)=\inf_{s_1,s_2\in[0,\delta)}d(\phi_{s_1}(x),\phi_{s_2}(y))<\varepsilon$. Then  there exists $s_1,s_2\in[0,\delta)$ such that 
$$d(\phi_{s_1}(x),\phi_{s_2}(y))\leq\varepsilon.$$
If $t(y)=0$, then $y\in X_{\xi}$, so taking $y'=y$ works.

On the other hand, if $t(y)>0$, then $y\in D$ or $y\in D_{\xi}$. 

Suppose that $y\in D$. 
  Since $\phi_{s_1}(x)\in X_{\xi}$ and $\phi_{s_2}(y)\in D_{\xi}$, by \ref{V1V2} we have $\phi_{s_1}(x)\in V_1^{out}$ and $\phi_{s_2}(y)\in V_1^{in}$,  because $\varepsilon<\varepsilon_1$. Then by Lemma \ref{af2} a)  $\varphi_{\theta_1}(\phi_{s_1}(x))\in D_\xi$, and so  $\phi_{\theta_1}(\phi_{s_1}(x))\in V_3$. Since $\phi_{\theta_1}(\phi_{s_1}(y))\in D_{\xi}$ and $\delta>\theta_1$, $d_{\delta}^1(\phi_{t'+s_1}(x),\phi_{t'+s_1}(y))\geq\varepsilon$, which is a contradiction. Therefore $y\notin D$.

 Next, suppose $y\in D_{\xi}$. 
We consider two cases depending on  $s_2\geq t(y)$ or $s_2<t(y)$.

First, if $s_2\geq t(y)$, taking $y'=y_{t(y)}$, we have $d_{3\delta}^1(x,y')\leq d_{\delta}^1(x,y')<\varepsilon$. Therefore it hold (\ref{0}). Second, if $s_2<t(y)$ then $\phi_{s_2}(y)\in D_{\xi}$. As  $\phi_{\theta_2}(x)\in X_{\xi}$ and $\varepsilon<\min\{\varepsilon_1,\varepsilon_2\}$,  by Lemma \ref{V1V2} we have $\phi_{s_1}(x)\in V_1^{out}$ and $\phi_{s_2}(y)\in V_1^{in}$ or $\phi_{s_1}(x)\in V_2^{out}$ and $\phi_{s_2}(y)\in V_2^{in}$. However, arguing as before the first alternative is discarded, and so $\phi_{s_1}(x)\in V_2^{out}$ and $\phi_{s_2}(y)\in V_2^{in}$. By Lemma \ref{af3} $\phi_{\theta_2}(\phi_{s_2}(y))\in X_{\xi}$. So $t(y)<2\delta$ because  $\theta_2 <\delta$. This implies  that for all $t\in[0,T-2\delta]$
$$d_{3\delta}^1(\phi_t(x),\phi_{t+t(y)}(y))\leq d_{\delta}^1(\phi_{t+t(y)}(x),\phi_{t+t(y)}(y))<\varepsilon.$$
Therefore  worth  (\ref{0}).  This prove the claim.

Then
$\bar{r}(\phi|_{X_{\xi}},T-2\delta,\varepsilon, 3\delta)\leq |F'|\leq |F|$
and therefore $$\bar{r}(\phi|_{X_{\xi}},T-2\delta,\varepsilon, 3\delta)\leq \bar{r}(\phi,T,\varepsilon,\delta)$$ from where
$$\frac{T-2\delta}{T}\cdot\frac{1}{T-2\delta}\displaystyle\log \bar{r}(\phi|_{X_{\xi}},T-2\delta,\varepsilon,3\delta)\leq \frac{1}{T}\displaystyle\log \bar{r}(\phi,T,\varepsilon,\delta).$$
Taking upper limit when $T\to \infty$, limits when $\varepsilon\to 0$ and $\delta\to0$ , we obtain the inequality in Proposition \ref{r2} and  end the proof.
\end{proof}

\begin{lema}\label{L31}
Let $\phi:\mathbb{R}_0^+\times X\to X$ be the semiflow of a regular impulsive dynamical system $(X,\varphi,D,I)$. Let consider $\tilde{\phi}$ and $H$ given by Corolario \ref{r1}. Then
$$h_{\text{top}}(\tilde{\phi})=\bar{h}_{\text{r}}(\phi).$$
\end{lema}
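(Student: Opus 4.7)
The plan is to establish the equality $h_{\text{top}}(\tilde\phi)=\bar h_{\text{r}}(\phi)$ by sandwiching both quantities between the same two bounds, using only results already stated in the paper; no new technical lemmas appear to be needed.

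For the upper bound $h_{\text{top}}(\tilde\phi)\le\bar h_{\text{r}}(\phi)$, I would simply chain Corollary \ref{r1} and Proposition \ref{r2}. Corollary \ref{r1} gives $h_{\text{top}}(\tilde\phi)\le\bar h_{\text{r}}(\phi|_{X_\xi})$, and Proposition \ref{r2} gives $\bar h_{\text{r}}(\phi|_{X_\xi})\le\bar h_{\text{r}}(\phi)$, so concatenation does the job.

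For the lower bound $\bar h_{\text{r}}(\phi)\le h_{\text{top}}(\tilde\phi)$, I would invoke the identity $h_{\text{top}}(\tilde\phi)=\hat h_{\text{s}}(\phi)$ recalled from \cite[Theorem 3]{JSM} at the start of this section, combined with Theorem \ref{teo1}, which contains the inequalities $\bar h_{\text{r}}(\phi)\le\hat h_{\text{r}}(\phi)$ and (via \cite[Theorem 2]{JSM}, recalled above) $\hat h_{\text{r}}(\phi)\le\hat h_{\text{s}}(\phi)$. Chaining these yields
\[
\bar h_{\text{r}}(\phi)\le\hat h_{\text{r}}(\phi)\le\hat h_{\text{s}}(\phi)=h_{\text{top}}(\tilde\phi),
\]
which closes the circle.

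There is no genuine obstacle here: the real technical work has already been carried out in Proposition \ref{r2} (where the semiconjugacy is pushed down to $X_\xi$ via the neighborhoods $V_1,V_2,V_3$) and in the JSM paper (where the identity $h_{\text{top}}(\tilde\phi)=\hat h_{\text{s}}(\phi)$ is proved). The only care needed is to confirm, before writing the chain, that the cited inequalities indeed hold without additional hypotheses on $\phi$; all of them are stated for an arbitrary (possibly discontinuous) semiflow on a compact metric space, which is exactly our setting.
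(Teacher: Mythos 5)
Your proposal is correct and follows essentially the same route as the paper: the lower bound via $\bar h_{\text{r}}(\phi)\le\hat h_{\text{r}}(\phi)\le\hat h_{\text{s}}(\phi)=h_{\text{top}}(\tilde\phi)$ (Theorem \ref{teo1} together with \cite[Theorems 2 and 3]{JSM}), and the upper bound by chaining Corollary \ref{r1} with Proposition \ref{r2}. No discrepancy with the paper's argument.
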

\begin{proof}
By \cite[Theorem 3]{JSM}, \cite[Theorem 2]{JSM} and Theorem \ref{teo1}, we have
$$h_{\text{top}}(\tilde{\phi})=\hat{h}_{\text{s}}(\phi)\geq \hat{h}_{\text{r}}(\phi)\geq\bar{h}_{\text{r}}(\phi).$$
By Corollary \ref{r1} and Lemma \ref{r2}, we have
$$h_{\text{top}}(\tilde{\phi})\leq \bar{h}_{\text{r}}(\phi|_{X_{\xi}})\leq \bar{h}_{\text{r}}(\phi).$$
With this, we obtain the result.
\end{proof}
\begin{lema}\label{L32}
Let $\phi:\mathbb{R}_0^+\times X\to X$ be the semiflow of a regular impulsive dynamical system $(X,\varphi,D,I)$. Let consider $\tilde{\phi}$ and $H$ given by Corolario \ref{r1}. Then
$$h_{\text{top}}(\tilde{\phi})=\bar{h}_{\text{s}}(\phi).$$
\end{lema}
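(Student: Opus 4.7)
The plan is to obtain both inequalities by chaining together results that are already in hand, without introducing any new spanning/separated set arguments. The key observation is that Lemma \ref{L31} already gives $h_{\text{top}}(\tilde{\phi})=\bar{h}_{\text{r}}(\phi)$, and Theorem \ref{teo1} supplies the relations between $\bar{h}_{\text{r}}$, $\bar{h}_{\text{s}}$, $\hat{h}_{\text{r}}$ and $\hat{h}_{\text{s}}$; together with \cite[Theorem 3]{JSM} (which identifies $h_{\text{top}}(\tilde{\phi})$ with $\hat{h}_{\text{s}}(\phi)$), these yield the equality sandwich.

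For the lower bound $h_{\text{top}}(\tilde{\phi})\leq \bar{h}_{\text{s}}(\phi)$, I would invoke Lemma \ref{L31} to write $h_{\text{top}}(\tilde{\phi})=\bar{h}_{\text{r}}(\phi)$, and then apply the first inequality of Theorem \ref{teo1} (established in Lemma \ref{lemacoro2}), namely $\bar{h}_{\text{r}}(\phi)\leq \bar{h}_{\text{s}}(\phi)$.

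For the upper bound $\bar{h}_{\text{s}}(\phi)\leq h_{\text{top}}(\tilde{\phi})$, I would use \cite[Theorem 3]{JSM} to identify $h_{\text{top}}(\tilde{\phi})=\hat{h}_{\text{s}}(\phi)$, and then apply the third inequality of Theorem \ref{teo1} (established in Lemma \ref{Lemacoro}), namely $\bar{h}_{\text{s}}(\phi)\leq \hat{h}_{\text{s}}(\phi)$.

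There is no genuine obstacle here: the substantive work has already been done in Corollary \ref{r1}, Proposition \ref{r2}, Theorem \ref{teo1}, and Lemma \ref{L31}. The only point requiring minimal care is making sure the appeal to \cite[Theorem 3]{JSM} is cited exactly as in the proof of Lemma \ref{L31}, so that the argument remains self-contained within the current section.
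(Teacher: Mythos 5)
Your proposal is correct and follows essentially the same route as the paper: the inequality $\bar{h}_{\text{s}}(\phi)\leq \hat{h}_{\text{s}}(\phi)=h_{\text{top}}(\tilde{\phi})$ via \cite[Theorem 3]{JSM} and Theorem \ref{teo1} is exactly the paper's first step, and your appeal to Lemma \ref{L31} for the reverse inequality merely repackages the paper's chain $h_{\text{top}}(\tilde{\phi})\leq \bar{h}_{\text{r}}(\phi|_{X_{\xi}})\leq \bar{h}_{\text{r}}(\phi)\leq \bar{h}_{\text{s}}(\phi)$ obtained from Corollary \ref{r1}, Proposition \ref{r2} and Theorem \ref{teo1}.
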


\begin{proof}
By \cite[Theorem 3]{JSM} and Theorem \ref{teo1}, we have
$$h_{\text{top}}(\tilde{\phi})=\hat{h}_{\text{s}}(\phi)\geq\bar{h}_{\text{s}}(\phi).$$
By Corollary \ref{r1}, Proposition \ref{r2} and Theorem \ref{teo1}, we have
$$h_{\text{top}}(\tilde{\phi})\leq \bar{h}_{\text{r}}(\phi|_{X_{\xi}})\leq \bar{h}_{\text{r}}(\phi) \leq \bar{h}_{\text{s}}(\phi).$$
putting this last inequalities together we finish the proof.
\end{proof}
\begin{lema}\label{L33}
Let $\phi:\mathbb{R}_0^+\times X\to X$ be the semiflow of a regular impulsive dynamical system $(X,\varphi,D,I)$. Let consider $\tilde{\phi}$ and $H$ given by Corolario \ref{r1}. Then
$$h_{\text{top}}(\tilde{\phi})=\hat{h}_{\text{r}}(\phi).$$
\end{lema}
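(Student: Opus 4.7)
The plan is to obtain the equality by bracketing $\hat{h}_r(\phi)$ between $h_{\text{top}}(\tilde{\phi})$ and itself, using only chained inequalities already assembled in the preceding results. The genuinely hard work has been done in Lemmas \ref{L31} and \ref{L32} (which compare $h_{\text{top}}(\tilde{\phi})$ with $\bar{h}_r(\phi)$ and $\bar{h}_s(\phi) = \hat{h}_s(\phi)$ via the semiconjugacy $H$ and Proposition \ref{r2}); here I just need to insert $\hat{h}_r(\phi)$ into the chain.

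For the upper bound, I would invoke Lemma \ref{L31} to write $h_{\text{top}}(\tilde\phi) = \bar{h}_r(\phi)$, and then the inequality $\bar{h}_r(\phi) \leq \hat{h}_r(\phi)$ from Theorem \ref{teo1} (which in turn is a consequence of the pointwise comparison $d_\delta^1 \leq d_\delta$ noted in the proof of Lemma \ref{Lemacoro}). This gives $h_{\text{top}}(\tilde\phi) \leq \hat{h}_r(\phi)$.

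For the reverse bound, I would appeal to \cite[Theorem 2]{JSM} (recalled explicitly in the excerpt) for the inequality $\hat{h}_r(\phi) \leq \hat{h}_s(\phi)$, and then use the identity $\hat{h}_s(\phi) = h_{\text{top}}(\tilde\phi)$ provided by \cite[Theorem 3]{JSM} (equivalently, by Lemma \ref{L32}). Concatenating yields $\hat{h}_r(\phi) \leq \hat{h}_s(\phi) = h_{\text{top}}(\tilde\phi)$, which is the opposite inequality.

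Since neither direction requires any new construction, there is no real obstacle: the content is purely a bookkeeping assembly of previously established facts. The only thing worth flagging is that the argument implicitly depends on the regularity hypothesis on $(X,\varphi,D,I)$ through Lemmas \ref{L31} and \ref{L32}, so the proof should cite those (and, transitively, Corollary \ref{r1} and Proposition \ref{r2}) rather than re-deriving the semiconjugacy $H$ with $\tilde\phi$.
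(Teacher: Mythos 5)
Your proposal is correct and follows essentially the same route as the paper: the upper bound $h_{\text{top}}(\tilde{\phi})\le\hat{h}_{\text{r}}(\phi)$ via $h_{\text{top}}(\tilde{\phi})\le\bar{h}_{\text{r}}(\phi)\le\hat{h}_{\text{r}}(\phi)$ (you route this through Lemma \ref{L31}, the paper unpacks it into Corollary \ref{r1}, Proposition \ref{r2} and Theorem \ref{teo1}), and the reverse bound via $\hat{h}_{\text{r}}(\phi)\le\hat{h}_{\text{s}}(\phi)=h_{\text{top}}(\tilde{\phi})$ from \cite[Theorems 2 and 3]{JSM}. No gaps.
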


\begin{proof}
By \cite[Theorem 3]{JSM} and \cite[Theorem 2]{JSM}, we have
$$h_{\text{top}}(\tilde{\phi})=\hat{h}_{\text{s}}(\phi)\geq \hat{h}_{\text{r}}(\phi).$$
By Corollary \ref{r1}, Lemma \ref{r2} and Theorem \ref{teo1}, we have
$$h_{\text{top}}(\tilde{\phi})\leq \bar{h}_{\text{r}}(\phi|_{X_{\xi}})\leq \bar{h}_{\text{r}}(\phi)\leq \hat{h}_{\text{r}}(\phi) .$$
With this, we obtain the result.
\end{proof}
\begin{proof}[ Proof of Theorem \ref{teo2}]
By Lemma \ref{L31}, Lemma \ref{L32}, Lemma \ref{L33} and \cite[Theorem 3]{JSM}, we get the result.
\end{proof}
\section{A variational Principle}
In order to proof the variational principle for regular impulsive semiflows, we need some technical lemmas. First, observe that by Theorem \ref{teo2}, we may apply The Variational Principle (\cite{VO}) to $\tilde{\phi}$, getting
$$\bar{h}_{top}(\phi)=\sup_{\tilde{\mu}\in\mathcal{M}_{\tilde{\phi}}(\pi(X_{\xi}))}\{h_{\tilde{\mu}}(\tilde{\phi}_1)\}.$$
Now, we are due to connect the measure theoretical information of $\tilde{\phi}$ to the corresponding one of $\phi$, and  to prove that replacing $\pi(X_{\xi})$  and $\tilde{\phi}$ by $X$ and $\phi$ in the above equation, equality  remains valid.
\begin{lema}\label{medidacero}
Let $\phi:\mathbb{R}^{+}_0\times X\to X$ be the semiflow  associated to the regular impulsive system  $(X,\varphi,D,I)$. If $\mu$ is a $\phi$-invariant probability measure, then 
$$\mu(D)=0 \,\,\, \text{ and } \,\,\, \mu(D_{\xi})=0.$$
\end{lema}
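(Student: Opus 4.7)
The proof splits naturally along the two claims, one very quick and one substantially more subtle.

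For $\mu(D)=0$, the plan is to show the stronger set-theoretic fact that $\phi_t^{-1}(D)=\emptyset$ for every $t>0$, after which $\phi$-invariance forces $\mu(D)=\mu(\phi_t^{-1}(D))=0$. Fixing $x\in X$ and $t>0$, I case-split using the explicit formula for the impulsive drift. If $t\in(0,\tau_1(x))$ then $\phi_t(x)=\varphi_t(x)\notin D$ by the very definition of $\tau_1(x)$ as the infimum of positive times at which the $\varphi$-orbit meets $D$. If $t=\tau_n(x)$ for some $n\ge 1$, then $\phi_t(x)=I(x^n)\in I(D)$, which is disjoint from $D$ by the regularity hypothesis $I(D)\cap D=\emptyset$. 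Finally, if $t\in(\tau_n(x),\tau_{n+1}(x))$, then $\phi_t(x)=\varphi_{t-\tau_n(x)}(I(x^n))$ with $t-\tau_n(x)<\tau_1(I(x^n))$, and once more $\phi_t(x)\notin D$. Since $T(x)=\infty$ partitions $[0,\infty)$ exactly into these intervals, the claim is proved.

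For $\mu(D_\xi)=0$, the guiding principle is that $D_\xi$ is a wandering set for the measure-preserving map $\phi_\xi$, the wandering being a consequence of property (2) of Definition~\ref{RIDS}. The key subclaim is: for $\mu$-almost every $x\in D_\xi$ and every integer $k\ge 1$ one has $\phi_{k\xi}(x)\notin D_\xi$. For $k=1$ and $x\in D_\xi$ with $\tau_1(x)>\xi$, this is immediate from property~(2), which gives $\phi_\xi(x)=\varphi_\xi(x)\in\varphi_\xi(D_\xi)\subset X_\xi$. When $\tau_1(x)\le\xi$ we use that finitely many impulses occur on $[0,\xi]$ (since $T(x)=\infty$), so $\phi_\xi(x)=\varphi_{\xi-\tau_n(x)}(I(x^n))$ for the largest impulse time $\tau_n(x)\le\xi$, and property~(3) together with the compactness of $I(D)$ (disjoint from $D$, hence at positive distance $\tau^\ast>0$ in impulse-time) and the structure provided by Lemmas~\ref{af2}--\ref{af3} should rule out that this lands in $D_\xi$.

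Once the no-return subclaim is established, the sets $\phi_\xi^{-n}(D_\xi)$ for $n=0,1,2,\ldots$ are pairwise essentially disjoint modulo $\mu$. By $\phi_\xi$-invariance, each has measure $\mu(D_\xi)$, so $N\cdot\mu(D_\xi)\le\mu(X)=1$ for every $N$, forcing $\mu(D_\xi)=0$.

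The main obstacle is the subcase $\tau_1(x)\le\xi$ of the subclaim, and in particular ruling out that after an impulse the $\varphi$-orbit of $I(x^n)$ re-enters $D_\xi$ within the remaining window $\xi-\tau_n(x)$. This is precisely the regime the technical lemmas \ref{V1V2}, \ref{af2}, \ref{af3} were designed to control, so the strategy is to combine the local neighborhood description of $\partial D_\xi=D\cup\varphi_\xi(D)$ from Lemma~\ref{af1} with property (3) and the fact that $\mu(D)=0$ (Part 1) allows discarding the boundary on a full-measure set. Iterating the argument from $k=1$ to general $k\ge 1$ is then a routine induction using that from the $\varphi$-orbit picture, entering $D_\xi$ continuously from $X_\xi$ requires first meeting $D$, where an impulse intervenes.
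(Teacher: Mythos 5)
Your first claim is handled correctly, and in fact more directly than in the paper: you observe that the impulsive drift never lands in $D$ at a positive time (your three cases $t\in(0,\tau_1(x))$, $t=\tau_n(x)$, $t\in(\tau_n(x),\tau_{n+1}(x))$ do exhaust $(0,\infty)$ because $T(x)=\infty$), so $\phi_t^{-1}(D)=\emptyset$ and invariance gives $\mu(D)=0$. The paper reaches the same conclusion via Poincar\'e recurrence, but the underlying fact ($\phi_t(x)\notin D$ for all $t>0$) is the same and your route avoids the recurrence theorem altogether.

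The second claim is where there is a genuine gap, and you have located it yourself but not closed it. Everything rests on the subclaim that an orbit starting in $D_\xi$ is not in $D_\xi$ at time $k\xi$, and the dangerous scenario is exactly the one you flag: the orbit meets $D$ at some impulse time $\tau_n(x)$, jumps to $I(x^n)$, and $I(x^n)$ (or its short $\varphi$-orbit) lies in $D_\xi$. None of the tools you invoke excludes this. Property (3) of Definition \ref{RIDS} only prevents $I(D)$ from returning to $I(D)$ within time $\xi$; Lemmas \ref{af1}--\ref{af3} describe neighborhoods of $\partial D_\xi=D\cup\varphi_\xi(D)$ and say nothing about where $I(D)$ sits relative to $D_\xi$; and $I(D)\cap D=\emptyset$ does not yield $I(D)\cap D_\xi=\emptyset$. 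So the sentence ``\dots should rule out that this lands in $D_\xi$'' is a hope, not a proof, and I do not see how to supply it from the lemmas of this paper alone. The paper's own proof outsources precisely this point to Lemma 5.2 of \cite{JSM}, which asserts that $\phi_t(x)\in D_\xi$ with $t>\xi$ forces $\phi_\tau(x)\in D$ for some positive $\tau<t$ --- impossible by your Part 1 --- and then concludes by Poincar\'e recurrence. Your wandering-set packaging of the time-$\xi$ map would be a perfectly acceptable substitute for the recurrence argument once that no-return lemma is in hand; as written, the crucial step is missing.
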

\begin{proof}
First, assume that $\mu(D)>0$. Since $\phi$ is measurable, by Poincar\'e Recurrence Theorem for $\mu$ almost every $x\in D$ there exist infinitely many moments $t>0$ such that $\phi_t(x)\in D$. Since $I(D)\cap D=\emptyset$, for all $t>0$ $\phi_t(x)\notin D$, which is a contradiction. Hence, $\mu(D)=0$ \\
Now, assume that $\mu(D_{\xi})>0$, then for $\mu$ almost every $x\in D_{\xi}$ there exist infinitely many moments $t>\xi$ such that $\phi_t(x)\in D_{\xi}$. By Lemma 5.2 in \cite{JSM} , for each $t>\xi$, there exists $\tau$, with $\xi<\tau<t$, such that $\phi_{\tau}(x)\in D$. Since $I(D)\cap D=\emptyset$ we have $\phi_t(x)\notin D_{\xi}$, which is a contradiction. Hence, $\mu(D_{\xi})=0$.
\end{proof}
Consider the inclusion map $i:X_{\xi}\to X$. The following lemma, exchange ergodic data between $\phi$ and $\tilde{\phi}$. 
\begin{lema}\label{push1}
Let us consider $\phi:\mathbb{R}^{+}_0\times X\to X$  the semiflow  associated to the regular impulsive system  $(X,\varphi,D,I)$. Then there exist a compact metric space $Y$, a continuous semiflow $\tilde{\phi}:\mathbb{R}_0^+\times Y\to Y$ and a uniform continuous bijection $H:X_{\xi}\to Y$ such that 
\begin{equation}\label{sconj1}
\tilde{\phi}_t\circ H=H\circ \phi_t, \mbox{ for all }t\geq 0
\end{equation}
Moreover,
$$(i\circ H^{-1})_*:\mathcal{M}_{\tilde{\phi}}(Y)\to \mathcal{M}_{\phi}(X)$$
is well defined and is a bijection.
\end{lema}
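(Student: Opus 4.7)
The plan is to split the proof into two parts: the existence of $(Y, \tilde{\phi}, H)$ satisfying \eqref{sconj1}, and the bijectivity of the push-forward map on invariant measures. For the first part, I would directly invoke \cite[Theorem 3]{JSM} as in Corollary \ref{r1}: take $Y = \pi(X_{\xi})$, with $\pi$ the quotient map that identifies $z \in D$ with $I(z)$; let $\tilde{\phi}$ be the induced continuous semiflow on the compact metric space $Y$; and set $H = \pi|_{X_{\xi}}$, which is the required uniformly continuous bijection satisfying \eqref{sconj1}.

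For the push-forward $\Phi = (i\circ H^{-1})_{*}$, I would define it explicitly: for $\tilde{\mu} \in \mathcal{M}_{\tilde{\phi}}(Y)$ and every Borel set $A \subseteq X$, put
$$\Phi(\tilde{\mu})(A) = \tilde{\mu}\bigl(H(A \cap X_{\xi})\bigr).$$
Because $D$ is closed and $D_{\xi}$ is open, $X_{\xi}$ is a Borel subset of $X$, and since $H$ is a continuous bijection between compact metric spaces it is a Borel isomorphism onto $Y$, so the right-hand side is well defined and yields a Borel probability measure on $X$ supported on $X_{\xi}$. To verify $\phi$-invariance I would use \eqref{sconj1} to establish
$$H\bigl(\phi_t^{-1}(A) \cap X_{\xi}\bigr) = \tilde{\phi}_t^{-1}\bigl(H(A \cap X_{\xi})\bigr),$$
up to a $\tilde{\mu}$-null set: for $x \in X_{\xi}$ with $\phi_t(x) \in A$, the relation $\tilde{\phi}_t(H(x)) = \pi(\phi_t(x))$ yields the containment whenever $\phi_t(x) \in X_{\xi}$, while the exceptional set where $\phi_t(x) \in D \cup D_{\xi}$ is swept away after translating Lemma \ref{medidacero} to $Y$ through $\pi$ and using $\tilde{\phi}$-invariance of $\tilde{\mu}$.

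For bijectivity, injectivity is immediate: if $\Phi(\tilde{\mu}_1) = \Phi(\tilde{\mu}_2)$, restricting to $X_{\xi}$ and pushing forward by $H$ recovers $\tilde{\mu}_i$. For surjectivity, given $\mu \in \mathcal{M}_{\phi}(X)$, Lemma \ref{medidacero} yields $\mu(D \cup D_{\xi}) = 0$, so $\mu$ is concentrated on $X_{\xi}$; define $\tilde{\mu} = H_{*}(\mu|_{X_{\xi}})$ and invert the identity above, using \eqref{sconj1} and Lemma \ref{medidacero} once more, to check that $\tilde{\mu}$ is $\tilde{\phi}$-invariant and that $\Phi(\tilde{\mu}) = \mu$.

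The main obstacle is the mismatch between the carriers of the two dynamics: $\phi$ acts on $X$ while $H$ is only defined on $X_{\xi}$, and a typical $\phi$-orbit crosses through $D \cup D_{\xi}$ infinitely often, so the semiconjugacy \eqref{sconj1} cannot be used set-theoretically as a genuine conjugacy. The two tools that resolve this are Lemma \ref{medidacero}, which makes $D \cup D_{\xi}$ negligible from the measure-theoretic viewpoint, and the quotient construction of \cite{JSM}, which ensures $\pi$ intertwines $\phi$ and $\tilde{\phi}$ on all of $X$; together they let us discard the bad set and transfer invariance cleanly in both directions, after which the remaining bookkeeping is routine.
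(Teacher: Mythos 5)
Your proposal follows essentially the same route as the paper: invoke \cite[Theorem 3]{JSM} (i.e.\ Corollary \ref{r1}) to produce $(Y,\tilde{\phi},H)$, then transfer measures via $H_*$ and $i_*$, proving surjectivity by restricting a $\phi$-invariant $\mu$ to $X_{\xi}$. The only differences are cosmetic: you justify $\mu(X_{\xi})=1$ through Lemma \ref{medidacero} where the paper cites $\mathrm{supp}(\mu)\subset\Omega_{\phi}\subset X_{\xi}$ (equivalent for this purpose), and you should note that $X_{\xi}$ is only a Borel set, not compact, so the Borel measurability of $H^{-1}$ comes from the Lusin--Souslin theorem rather than from compactness --- a point the paper itself leaves implicit.
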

\begin{proof}
By Corollary \ref{r1} (see Theorem 3 in \cite{JSM}), for $Y=\pi(X_{\xi})$, $H= \pi|_{X_{\xi}} $ and $\tilde{\phi}$ the induced semiflow  satisfy \eqref{sconj1}.\\
To prove that $(i\circ H^{-1})_*$ is well defined, let consider $\tilde{\mu}\in \mathcal{M}_{\tilde{\phi}}(\pi(X_{\xi}))$. Since $H^{-1}$ is well defined, we have that $H^{-1}_*\tilde{\mu} \in \mathcal{M}_{\phi}(X_{\xi})$. Moreover, this implies that $i_*(H^{-1}_*\tilde{\mu}) \in \mathcal{M}_{\phi}(X)$. As $i_*(H^{-1}_*\tilde{\mu})=(i\circ H^{-1})_*\tilde{\mu}$, we conclude that $(i\circ H^{-1})_*\tilde{\mu} \in \mathcal{M}_{\phi}(X)$. Therefore $(i\circ H)_*$ is well defined.\\
Since $H$ is a bijection and $i$ is injective, we have that $H_*$ is a bijection and $i_*$ is injective, respectively. Finally, only we need to prove that $i_*$ is surjective . Indeed, given $\mu\in \mathcal{M}_{\phi}(X)$, let consider the measure $\nu\in\mathcal{M}(X_{\xi})$ the restriction to $X_\xi$ of $\mu$, that is for any  borel subset $E\subset X_\xi$ then $$\nu(E)= \mu(E)$$
As $supp(\mu)\subset \Omega _{\phi}\subset X_{\xi}$, for each $t\geq0$ we have 
$$\begin{array}{ll} \nu(\phi_t^{-1}(E))                         &=\nu((\phi|_{X_{\xi}})_t^{-1}(E))\\
                                        & =\mu(\phi_t^{-1}(E)\cap X_{\xi})\\
                                        & = \mu(\phi_t^{-1}(E))\\
                                        & =\mu(E)\\
                                        & =\nu(E).\end{array}$$
Therefore $\nu\in\mathcal{M}_{\phi}(X_{\xi})$ and $i_*$ is surjective. With this, we obtain the desire result.
\end{proof}
\begin{proof} [Proof of Theorem \ref{teo3}]
From Lemma \ref{push1}, we have that $(\phi_1,\mu)$ and  $(\tilde{\phi}_1,\nu)$ are ergodically equivalent and $\mathcal{M}_{\phi}(X)\neq\emptyset$. Moreover, from Proposition 9.5.1 in \cite{VO} we have  $$\bar{h}_{top}(\phi)=\sup_{\mu\in\mathcal{M}_{\phi}(X_{\xi})}\{h_{\mu}(\phi_1)\}.$$
Finally, from Lemma \ref{medidacero}
$$\bar{h}_{top}(\phi)=\sup_{\mu\in\mathcal{M}_{\phi}(X)}\{h_{\mu}(\phi_1)\}.$$

\end{proof}

\end{document}